\documentclass{amsart}
\usepackage{graphics}

\newcommand{\Q}{\mathbb{Q}}
\newcommand{\C}{\mathbb{C}}
\newcommand{\D}{\mathbb{D}}

\newcommand{\R}{\mathbb{R}}
\newcommand{\N}{\mathbb{N}}

\newcommand{\dom}{\operatorname{dom}}

\newcommand{\diam}{\operatorname{diam}}

\renewcommand{\Re}{\operatorname{Re}}
\renewcommand{\Im}{\operatorname{Im}}

\newtheorem{theorem}{Theorem}[section]
\newtheorem{lemma}[theorem]{Lemma}

\theoremstyle{definition}
\newtheorem{definition}[theorem]{Definition}

\theoremstyle{theorem}

\theoremstyle{theorem}
\newtheorem{proposition}[theorem]{Proposition}

\theoremstyle{theorem}

\theoremstyle{theorem}

\theoremstyle{definition}

\theoremstyle{theorem}

\numberwithin{equation}{section}

\begin{document}
\title[Computing boundary extensions part 2]{Computing boundary extensions of conformal maps part 2}

\author{Timothy H. McNicholl}
\address{Department of Mathematics\\
Iowa State University\\
Ames, Iowa 50011 USA
}
\email{mcnichol@iastate.edu}

\begin{abstract}    
It is shown that there is a computable and conformal map on the unit disk that has a computable boundary extension even though its range does not have a computable boundary connectivity function.
\end{abstract}
\subjclass{30C30, 30E10, 03D78, 03F60, 54D05}
\keywords{boundary behavior of conformal maps, approximation, computational complex analysis, computable analysis, effective local connectivity}

\maketitle

\section{Introduction}\label{sec:INTRO}

We continue the investigation in \cite{McNicholl.2011.b} of the information required to compute the boundary extension of a computable conformal map.  In \cite{McNicholl.2011.b}, it is shown that local connectivity provides sufficient information.  Here, were show that it does not provide \emph{necessary} information.

Local connectivity information for the boundary of a domain $D$ is quantified by means of a \emph{boundary connectivity function}.  This is a function $g : \N \rightarrow \N$ with the property that whenever $k \in \N$ and $p,q$ are boundary points of $D$ so that $0 < |p - q| \leq 2^{-g(k)}$, the boundary of $D$ includes an arc from $p$ to $q$ whose diameter is smaller than $2^{-k}$.  The case for using boundary connectivity functions to represent the local connectivity of the boundary of $D$ is made in \cite{McNicholl.2011.b}.  See also \cite{Daniel.McNicholl.2012}.  

 A conformal map of the unit disk onto a domain $D$ has a boundary extension if and only if $D$ is bounded and has a boundary connectivity function.  In \cite{McNicholl.2011.b}, it is shown that if $\phi$ is a computable and conformal map of the unit disk onto a bounded domain $D$ that has a computable boundary connectivity function, then the boundary extension of $\phi$ is computable.  Here, we complement this result by proving the following.

\begin{theorem}\label{thm:MAIN}
There is a conformal map on the unit disk that has a computable boundary extension even though its range does not have a computable boundary connectivity function.
\end{theorem}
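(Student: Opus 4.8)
The plan is to construct a bounded, simply connected domain $D$ whose boundary is locally connected --- so that $D$ (being bounded) has a boundary connectivity function and hence the conformal map $\phi\colon\D\to D$ has a continuous boundary extension $\hat\phi$ --- but for which the ``effective local connectivity'' of $\partial D$ fails as badly as possible, while $\phi$ itself remains computable (whence $\hat\phi$, being the restriction of a computable function to the computably compact circle, is computable). Take $D$ to be the open unit square with a sequence of pairwise disjoint thin rectangular slots cut into its bottom edge. For the $n$-th Turing machine let $T_n\in\N\cup\{\infty\}$ be its running time on input $n$, so $n$ lies in the halting set $K$ exactly when $T_n<\infty$, and put $c_n:=2^{-T_n}$ if $T_n<\infty$ and $c_n:=0$ otherwise; the sequence $(c_n)$ is uniformly computable as a sequence of reals, since after $s$ steps of simulation one either already knows $T_n$ exactly or knows $c_n\le 2^{-s-1}$. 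Remove from $(0,1)^2$, for each $n$, the slot $L_n:=(2^{-n}-\tfrac{w_n}{2},\,2^{-n}+\tfrac{w_n}{2})\times(0,\tfrac14 2^{-n}]$ of height $h_n:=\tfrac14 2^{-n}$ and width $w_n:=\tfrac14 2^{-n}c_n$, so that $w_n=0$ precisely when $n\notin K$. The slots accumulate only at the origin; one checks routinely that the resulting domain $D\subseteq(0,1)^2$ is bounded and simply connected with locally connected boundary (the one nonobvious point, local connectivity at the origin, holds because there $\partial D$ consists of a rectifiable arc limiting onto the origin together with the left edge of the square).

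For the negative assertion, suppose some computable $g\colon\N\to\N$ were a boundary connectivity function for $D$, and fix $n\in K$. The boundary points $p_n:=(2^{-n}-\tfrac{w_n}{2},\,\tfrac{h_n}{2})$ and $q_n:=(2^{-n}+\tfrac{w_n}{2},\,\tfrac{h_n}{2})$ lie on the two long walls of $L_n$ at half-height, with $0<|p_n-q_n|=w_n$, while every arc in $\partial D$ joining them must either climb over the top of $L_n$ or descend to the bottom edge of the square, so every such arc has diameter at least $\tfrac{h_n}{2}=2^{-n-3}$. Hence, for every $k\ge n+3$, the defining property of $g$ forces $w_n>2^{-g(k)}$, that is, $g(n+3)>-\log_2 w_n=n+T_n+2>T_n$. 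Thus $n\mapsto g(n+3)$ is a computable bound on the halting time of every $n\in K$, so simulating machine $n$ for $g(n+3)$ steps decides $K$, a contradiction. Therefore $D$ has no computable boundary connectivity function. (This is no conflict with computability of $\hat\phi$: the modulus of uniform continuity of $\hat\phi$ bounds the diameters of images of small subarcs of the circle, a different quantity, and $\hat\phi$ is very far from injective.)

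For the positive assertion, approximate $D$ by the bounded rational polygonal domains $D_s:=(0,1)^2\setminus\bigcup_{n=1}^{s}L_n^{(s)}$, where $L_n^{(s)}$ is the slot of the same height $h_n$ but of width $w_n^{(s)}:=\tfrac14 2^{-n}$ times the stage-$s$ approximation to $c_n$; then $w_n^{(s)}>0$ decreases to $w_n$ with $w_n^{(s)}-w_n\le\tfrac14 2^{-n-s}$. Each $D_s$ is a Jordan domain given by finitely many rational vertices, so its suitably normalized Riemann map $\psi_s\colon\D\to D_s$ is computable uniformly in $s$ by the effectivity of the Riemann mapping theorem for such domains. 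Passing from $D_s$ to $D_{s+1}$ only narrows the slots $n\le s$ (each by at most $\tfrac14 2^{-n-s}$) and introduces one new slot of diameter at most $2^{-s}$; since each such change is confined to a region that is either of small diameter or very thin, a quantitative form of the Carathéodory kernel theorem --- in which pushing a boundary arc of a bounded simply connected domain inward by $\delta$ moves the normalized Riemann map by at most $\eta(\delta)$ in the supremum norm on $\overline{\D}$, for an explicit modulus $\eta$ independent of the rest of the domain --- yields $\|\psi_s-\psi_{s+1}\|_{\overline{\D}}\le C 2^{-s/2}$, say. Summing, $(\psi_s)$ converges uniformly on $\overline{\D}$ with a computable rate to $\phi$, so $\phi$ is computable and hence so is its boundary extension $\hat\phi$.

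The main obstacle is this last step: extracting from Carathéodory kernel convergence a version with \emph{effective and uniform} control of the boundary values. One cannot invoke equicontinuity of $(\hat\psi_s)$, since the thin slots make the local connectivity moduli of the $D_s$ degenerate as $s\to\infty$; what rescues the argument is that the \emph{difference} $\psi_s-\psi_{s+1}$ remains small no matter how thin the slots become, because both maps send the (possibly tiny) affected boundary arcs into the same region of small diameter. Making this quantitative and uniform in the data --- ideally by reusing perturbation estimates already developed in \cite{McNicholl.2011.b} or \cite{Daniel.McNicholl.2012} --- is where the real work lies; the encoding of $K$ and the verification that $\partial D$ is locally connected are routine by comparison.
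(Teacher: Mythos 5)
Your negative direction is sound and essentially parallels the paper's: you cut slots into the bottom edge whose widths encode halting times, so that the two walls of the $n$-th slot are close together but can only be joined inside $\partial D$ by an arc of diameter comparable to the slot's height; the paper does the same with inward-pointing ``tents'' and ``spikes'' along the bottom of the unit square. In both cases a boundary connectivity function yields a computable bound on halting times and hence computes the encoded c.e.\ set.

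The positive direction, however, has a genuine gap, and it is exactly the one you flag yourself. Your strategy is to approximate $D$ by rational polygonal Jordan domains $D_s$, obtain computable Riemann maps $\psi_s$ uniformly in $s$, and then invoke a ``quantitative form of the Carath\'eodory kernel theorem'' asserting that pushing a boundary arc inward by $\delta$ moves the normalized Riemann map by at most $\eta(\delta)$ \emph{in the supremum norm on $\overline{\D}$}, with $\eta$ independent of the domain. No such theorem is cited, and it is not a routine consequence of Carath\'eodory kernel convergence, which gives only locally uniform convergence on the open disk. The obstacle is precisely the one you name: the intermediate domains $D_s$ have degenerating local connectivity moduli, so one cannot appeal to equicontinuity of the boundary extensions $\hat\psi_s$, and the claimed sup-norm perturbation bound is what would have to be proved. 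Asserting it with ``yields $\|\psi_s-\psi_{s+1}\|_{\overline{\D}}\le C2^{-s/2}$, say'' and then saying ``this is where the real work lies'' leaves the central claim of the theorem unestablished.

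The paper avoids this entirely by never passing through approximating Jordan domains. It constructs $D$ so that $D$ is computably open and $\partial D$ is computably closed, applies Hertling's effective Riemann mapping theorem directly to get a computable $\phi$ on $\D$, and then computes the boundary extension by a crosscut argument. The key device is the notion of an \emph{acceptable} crosscut: one whose endpoints are non-vertices lying on significant boundary constituents that can be joined by an axis-parallel (``taxicab'') arc disjoint from $D$. For such a crosscut $C$ with taxicab arc $\tau$, the Jordan curve $C\cup\tau$ has diameter at most $(1+\sqrt2)\diam(C)$, its interior contains $A_{r_0,\zeta}^+$ by the earlier theory of crosscuts that ``recognizably bound'' $\phi$ at $\zeta$, and this gives an \emph{effective} upper bound on $|\phi(z_1)-\phi(z_2)|$ for $z_1,z_2$ near $\zeta$, with no appeal to a boundary connectivity function. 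Acceptability is decidable from an approximate crosscut by deciding which significant constituents its endpoint wads meet, which is why the domain is built out of segments rather than slots: the taxicab-arc condition is what replaces the missing modulus of local connectivity. If you want to rescue your approach, you would have to supply and effectivize a genuine perturbation estimate for boundary values of Riemann maps under narrowing a slot, uniformly as the slot width tends to zero; that is a substantial undertaking and is not a known off-the-shelf result.
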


Theorem \ref{thm:MAIN} strengthens a result from \cite{Couch.Daniel.McNicholl.2012} on the computability of Peano continua.  Namely, there is a computable function on the unit interval whose range does not have a computable \emph{modulus of local connectivity}.  This result says that the converse of the Hahn-Mazurkiewicz Theorem is not effective.  By a modulus of local connectivity for a subset of $\R^n$ $X$ we mean a function $g : \N \rightarrow \N$ so that whenever $k \in \N$ and $p,q \in X$ are such that $0 < d(p,q) \leq 2^{-g(k)}$, $X$ includes an arc from $p$ to $q$ whose diameter is smaller than $2^{-k}$.  Hence, a boundary connectivity function for a domain $D$ is a modulus of local connectivity for the boundary of $D$.   

We build the conformal map in Theorem \ref{thm:MAIN} by first constructing its range, $D$.  To construct $D$, we first form its boundary which will consist of a connected union of countably many line segments.  We fix a computably enumerable but incomputable set of natural numbers $A$, and choose these line segments so as to encode $A$ into every boundary connectivity function for $D$ (see Theorem \ref{thm:NOT.ELC}).  In particular, by taking $A$ to be the Halting Set, we can encode all computably enumerable sets into every boundary connectivity function for $D$.  That is, every boundary connectivity function for $D$ computes every computably enumerable set.  

We construct $D$ and its boundary so that $D$ is computably open and its boundary is computably closed.  It then follows from the Effective Riemann Mapping Theorem \cite{Hertling.1999} (see also \cite{Bishop.Bridges.1985}) that there is a computable and conformal map of the unit disk onto $D$.  Let $\phi$ denote such a map.  

The boundary of $D$ is constructed so that $\phi$ has a boundary extension.  The challenge then is to compute the boundary extension of $\phi$ without local connectivity information.  Our strategy is to carefully modify the approach in \cite{McNicholl.2011.b}; in particular, many of these results do not make use of the boundary connectivity function and so they apply equally well to the situation considered here.
Most importantly, the concept of a crosscut that recognizably bounds the value of $\phi$ on a unimodular $\zeta$ can be employed here.  These crosscuts (which will be defined in Section \ref{sec:BACKGROUND}) have the property that if $\tau$ is a sufficiently small boundary arc that joins its endpoints and contains no point of $D$, then $\phi(\zeta) \in \tau$.  In \cite{McNicholl.2011.b}, a boundary connectivity function is then used to form an upper bound on the diameter of $\tau$ from the diameter of the crosscut.  We show in Subsection \ref{subset:CLASSICAL} of Section \ref{sec:BACKGROUND} that without a boundary connectivity function we can still make progress by singling out a class of \emph{acceptable} crosscuts from those that recognizably bound the value of $\phi$ on $\zeta$.  In particular, our domain $D$ is constructed so that the endpoints of an acceptable crosscut $C$ can be joined by an arc $\tau$ that contains no point of $D$ and that consists of a single horizontal line segment, a single vertical line segment, or a union of a horizontal and vertical line segment that meet at a right angle.  This allows us to effectively bound the diameter of $C \cup \tau$.  We then show that $\phi(\zeta) \in \tau$.   

\section{Background and summary of prior results}\label{sec:BACKGROUND}

For information on classical computability notions such as computable enumerability and Turing reducibility we refer the reader to \cite{Cooper.2004}.

Suppose $f$ is a function that maps complex numbers to complex numbers.  We say that $f$ is \emph{computable} if there is an algorithm $P$ that satisfies the following three criteria.
\begin{itemize}
	\item \bf Approximation:\rm\ Whenever $P$ is given an open rational rectangle as input, it either does not halt or produces an open rational rectangle as output.  (Here, the input rectangle is regarded as an approximation of a $z \in \dom(f)$ and the output rectangle is regarded as an approximation of $f(z)$.)

	\item \bf Correctness:\rm\ Whenever $P$ halts on an open rational rectangle $R$, the rectangle it outputs contains $f(z)$ for each $z \in R \cap \dom(f)$. 
	
	\item \bf Convergence:\rm\ Suppose $U$ is a neighborhood of a point $z \in \dom(f)$ and that $V$ is a neighborhood of $f(z)$.  Then, there is an open rational rectangle $R$ such that $R$ contains $z$, $R$ is included in $U$, and when $R$ is put into $P$, $P$ produces a rational rectangle that is included in $V$.
\end{itemize}
A comprehensive treatment of the computability of functions on continuous domains can be found in \cite{Weihrauch.2000}.  See also \cite{Turing.1937}, \cite{Grzegorczyk.1957}, \cite{Lacombe.1955.a}, \cite{Lacombe.1955.b}, \cite{Brattka.Weihrauch.1999}, \cite{Pour-El.Richards.1989}, and \cite{Braverman.Cook.2006}.  

Let $\D$ denote the unit disk; that is the open disk with center $0$ and radius $1$.  The \emph{unit circle} is the boundary of $\D$.   Our approach to computing boundary extensions is based on the following notion and result from \cite{McNicholl.2011.b}.

\begin{definition}\label{def:STRONGLY.COMPUTABLE}
Suppose $f$ is a function that maps complex numbers to complex numbers and is defined at every point on the unit circle.  We say that $f$ is \emph{strongly computable on the unit circle} if there is an algorithm $P$ with the following properties.
\begin{itemize}
	\item  \bf Approximation:\rm\ Whenever an open rational rectangle is input to $P$, $P$ either does not halt or outputs an open rational rectangle.
	\item  \bf Strong Correctness:\rm\ If $P$ outputs a rational rectangle $R_1$ on input $R$, then $f(z) \in R_1$ whenever $z \in R \cap \dom(f)$.
	\item \bf Convergence:\rm\ If $U$ is a neighborhood of a unimodular point $\zeta$, and if $V$ is a neighborhood of $f(\zeta)$, then $\zeta$ belongs to an open rational rectangle $R \subseteq U$ so that $P$ halts on input $R$ and produces a rational rectangle that is contained in $V$.
\end{itemize}
\end{definition}

\begin{proposition}\label{prop:STRONGLY.COMPUTABLE}\label{prop:STRONGLY.COMP}
Suppose $f : \overline{\D} \rightarrow \C$.  Then, $f$ is computable if and only if $f$ is both computable on the unit disk and strongly computable on the unit circle.
\end{proposition}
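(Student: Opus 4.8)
The plan is to establish both implications; the forward one is just unwinding definitions, while the converse carries the content. For the forward direction, suppose $P$ witnesses that $f$ is computable. Then the identical algorithm $P$ witnesses that $f$ is computable on the unit disk, since its correctness and convergence clauses hold in particular for inputs meeting $\D$ and for points of $\D \subseteq \dom(f)$. It also witnesses that $f$ is strongly computable on the unit circle: the approximation clauses are identical, the ``strong correctness'' clause of Definition~\ref{def:STRONGLY.COMPUTABLE} is the same assertion as the correctness clause for a computable function once one recalls that here $\dom(f) = \overline{\D}$, and the convergence clause for a computable $f$ holds at every point of $\dom(f) = \overline{\D}$, hence in particular at every unimodular point. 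So no work is required in this direction.

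For the converse, fix an algorithm $P_1$ witnessing that $f$ is computable on the unit disk (so in particular, whenever $P_1$ halts on a rational rectangle $R$ its output contains $f(z)$ for every $z \in R \cap \D$) and an algorithm $P_2$ witnessing that $f$ is strongly computable on the unit circle. The single observation that drives the proof is that, because $\dom(f) = \overline{\D}$, the strong correctness of $P_2$ says: whenever $P_2$ halts on $R$, its output contains $f(z)$ for \emph{every} $z \in R \cap \overline{\D}$, not merely for the unimodular members of $R$. Define $P$ on an open rational rectangle $R$ as follows. First decide whether $\overline{R} \subseteq \D$; this is decidable because $x^2 + y^2$ attains its maximum over $\overline{R}$ at one of the four corners, all of which have rational coordinates. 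If $\overline{R} \subseteq \D$, simulate $P_1$ on $R$ and return its output should it halt; otherwise simulate $P_2$ on $R$ and return its output should it halt; in every other case $P$ diverges.

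It then remains to verify, for this $P$, the approximation, correctness, and convergence clauses in the definition of a computable function. Approximation is immediate from those for $P_1$ and $P_2$. For correctness, suppose $P$ halts on $R$: if it used the first branch then $R \subseteq \D$, so $R \cap \overline{\D} = R \cap \D$ and the correctness of $P_1$ applies; if it used the second branch then the strong correctness of $P_2$ applies directly to all of $R \cap \overline{\D}$. For convergence, fix $z \in \overline{\D}$, a neighborhood $U$ of $z$, and a neighborhood $V$ of $f(z)$. If $z$ is unimodular, then every open rational rectangle containing $z$ fails the test $\overline{R} \subseteq \D$, so $P$ runs $P_2$ on every such input; the convergence clause for $P_2$ then yields an open rational rectangle $R \subseteq U$ with $z \in R$ and output contained in $V$. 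If $z \in \D$, first choose an open rational rectangle $U'$ with $z \in U'$ and $\overline{U'} \subseteq \D \cap U$, which exists since $\D \cap U$ is open; apply the convergence clause for $P_1$ to $U'$ and $V$ to obtain an open rational rectangle $R'$ with $z \in R' \subseteq U'$ and $P_1(R')$ contained in $V$; since $\overline{R'} \subseteq \overline{U'} \subseteq \D$, the algorithm $P$ takes the first branch on input $R'$ and returns $P_1(R') \subseteq V$, with $R' \subseteq U$.

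The place where care is needed --- and the only place the construction could fail --- is the treatment of rectangles that meet the unit circle. One might try to merge the outputs of $P_1$ and $P_2$ on such a rectangle $R$, but $P_1$ is guaranteed to converge only at interior points and may diverge on any $R$ straddling the circle, so combining outputs is hopeless. The resolution is precisely the observation above: because the strong correctness of $P_2$ refers to the full domain $\overline{\D}$, the algorithm $P_2$ alone is correct on every rectangle not contained in $\D$, and $P_1$ is needed only to furnish convergence at interior points --- that is, only on the rectangles $R$ with $\overline{R} \subseteq \D$, where $P_2$ is never consulted.
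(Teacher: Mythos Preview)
The paper does not include its own proof of this proposition: it is stated in Section~\ref{sec:BACKGROUND} as a background result imported from \cite{McNicholl.2011.b}, so there is nothing to compare your argument against directly. That said, your proof is correct and is the natural one. The forward direction is indeed trivial, and for the converse you have isolated the key point: the \emph{strong} correctness clause for $P_2$ guarantees correct output for every $z \in R \cap \overline{\D}$, not merely for unimodular $z$, so $P_2$ alone handles any rectangle that touches the boundary, while $P_1$ is invoked only on rectangles compactly contained in $\D$ to supply convergence at interior points. Your decidability check for $\overline{R} \subseteq \D$ via the corner values of $x^2 + y^2$ is fine, and your care in choosing $U'$ with $\overline{U'} \subseteq \D \cap U$ before invoking convergence of $P_1$ correctly ensures that the resulting $R'$ satisfies $\overline{R'} \subseteq \D$, so that $P$ does take the first branch on it.
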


When $U$ is an open subset of the plane, let $R(U)$ denote the set of all closed rational rectangles that are included in $U$.  When $C$ is a closed subset of the plane, let $R(C)$ denote the set of all open rational rectangles that contain at least one point of $C$.  Whether $X$ is open or closed, the set $R(X)$ completely identifies $X$.  That is, $R(X) = R(X')$ if and only if $X = X'$.

Let us call an open subset of the plane $U$ \emph{computable} if $R(U)$ is computably enumerable.  That is, if the elements of $R(U)$ can be arranged into a sequence $\{R_n\}_{n \in \N}$ in such a way that there is an algorithm that computes $R_n$ from $n$ for every $n \in \N$.  Intuitively, as such an enumeration is run, it provides more and more information about what is in the set.  
We similarly define what it means for a closed subset of the plane to be computable.  Again, by enumerating the rational rectangles that contain at least one point of a closed set $C$ we obtain more and more information about what is in the set.  

Throughout the rest of this section, $\phi$ denotes a conformal map of the unit disk onto a domain $D$.  We assume that $\phi$ has a boundary extension which we denote by $\phi$ as well. 

Suppose $C$ is an arc in $\overline{D}$.  If the only points of $C$ that lie on the boundary of $D$ are the endpoints of $C$, then $C$ is called a \emph{crosscut} of $D$.  If $C$ is a crosscut of $D$, then $D - C$ has exactly two connected components.  These components are called the \emph{sides} of $C$.   
When $C$ is a crosscut of $D$ that omits $\phi(0)$, let $C^-$ denote the side of $C$ that contains $\phi(0)$ and let $C^+$ denote the other side.  

Let $D_r(z)$ denote the open disk with center $z$ and radius $r$.  When $0 < r < 1$ and $|\zeta| = 1$, let $A_{r, \zeta}$ denote the image of $\phi$ on $\overline{\D} \cap \partial D_r(\zeta)$.  The class of crosscuts that recognizably bound the value of $\phi$ on $\zeta$ is defined as follows.

\begin{definition}\label{def:RECOGNIZABLE}  Suppose $|\zeta| =1$.  Let $C$ be a crosscut of $D$.  We say that $C$ \emph{recognizably bounds the value of $\phi$ on $\zeta$} if there are rational numbers $r_0,s_0$ such that the following hold.  
\begin{enumerate}
	\item $0 < r_0 < s_0 < 1/2$.\label{def:RECOGNIZABLE.1}
	
	\item $\phi((1 - s_0)\zeta) \in C$.\label{def:RECOGNIZABLE.2}
	
	\item $C \cap A_{s_0, \zeta}$ is connected, and $C \cap A_{s_0, \zeta}^+$ has two connected components.\label{def:RECOGNIZABLE.3}
	
	\item $|\phi(t\zeta) - z| >  m(s_0, N_0, r_0)$ whenever $z \in \overline{A_{s_0, \zeta}^+ \cap C}$ and $1 - s_0 \leq t \leq 1 - r_0$.\label{def:RECOGNIZABLE.four}
\end{enumerate}
We say that $(r_0, s_0)$ \emph{witnesses} that $C$ recognizably bounds the value of $\phi$ on $\zeta$.
\end{definition}

Let $\mathcal{C}_{(s_0, r_0, \zeta)}$ denote the set of all crosscuts $C$ such that $(s_0, r_0)$ witnesses that $C$ recognizably bounds the value of $\phi$ on $\zeta$.  

The fundamental facts about these crosscuts are the following two theorems which are proved in \cite{McNicholl.2011.b}.

\begin{theorem}\label{thm:EXISTENCE}  Suppose $|\zeta| = 1$.
Then, there are crosscuts of arbitrarily small diameter that recognizably bound the value of $\phi$ on $\zeta$.  That is, for every $\epsilon > 0$ there is a crosscut that recognizably bounds the value of $\phi$ on $\zeta$ and whose diameter is smaller than $\epsilon$.
\end{theorem}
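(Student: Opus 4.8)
The plan is to construct $C$ explicitly, as a small modification of the $\phi$-image of a circular crosscut of $\D$ about $\zeta$. For $0 < s < 1/2$ write $\sigma_s = \overline{\D} \cap \partial D_s(\zeta)$, so that $A_{s,\zeta} = \phi(\sigma_s)$, and let $G_s = D_s(\zeta) \cap \D$ be the component of $\D \setminus \sigma_s$ whose closure contains $\zeta$. Since $0 \notin G_s$, the point $\phi(0)$ lies in the side $A_{s,\zeta}^-$ and $A_{s,\zeta}^+ = \phi(G_s)$; and since $\phi$ is uniformly continuous on the compact set $\overline{\D}$ while $\overline{G_s}$ is contained in the closed disk of radius $s$ about $\zeta$, we have $\diam \overline{A_{s,\zeta}^+} = \diam \phi(\overline{G_s}) \to 0$ as $s \to 0^+$. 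So the first step is to fix a rational $s_0 \in (0,1/2)$ small enough that $\diam \phi(\overline{G_{s_0}}) < \epsilon$. Next, let $C_0$ be the subarc of $A_{s_0,\zeta}$ obtained by deleting a short arc at each of its two ends, chosen so that $\phi((1-s_0)\zeta)$ --- which is an interior point of $A_{s_0,\zeta}$, since $(1-s_0)\zeta$ is an interior point of $\sigma_{s_0}$ --- is an interior point of $C_0$; then attach to the two endpoints of $C_0$ two disjoint arcs (\emph{whiskers}) running through $A_{s_0,\zeta}^+$ out to $\partial D$, ending at two distinct boundary points, and let $C$ be the resulting arc. (If the endpoints of $A_{s_0,\zeta}$ share a $\phi$-image, so that $A_{s_0,\zeta}$ is a loop rather than an arc, one proceeds the same way, deleting a short arc about that one point.) Then $C$ is an arc contained in $\overline{A_{s_0,\zeta}^+}$ that meets $\partial D$ exactly at the two whisker endpoints --- hence a crosscut of $D$ --- with $\diam C \le \diam \overline{A_{s_0,\zeta}^+} < \epsilon$, and $\phi(0) \notin C$ since $\phi(0) \in A_{s_0,\zeta}^-$ is disjoint from $\overline{A_{s_0,\zeta}^+}$.

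For $C$ built this way, conditions \ref{def:RECOGNIZABLE.1}--\ref{def:RECOGNIZABLE.3} are immediate. Condition \ref{def:RECOGNIZABLE.1} leaves us free to choose any rational $r_0 \in (0,s_0)$; condition \ref{def:RECOGNIZABLE.2} holds because $\phi((1-s_0)\zeta) \in C_0 \subseteq C$; and condition \ref{def:RECOGNIZABLE.3} holds because $C \cap A_{s_0,\zeta} = C_0$ is connected while $C \cap A_{s_0,\zeta}^+$ is exactly the union of the two whiskers with their attachment points deleted, which has precisely two components. Note also that, since $\phi((1-s_0)\zeta)$ is an \emph{interior} point of $C_0$, it does not belong to $\overline{A_{s_0,\zeta}^+ \cap C}$, so there is no conflict between \ref{def:RECOGNIZABLE.2} and \ref{def:RECOGNIZABLE.four} at $t = 1 - s_0$.

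The real content of the theorem is condition \ref{def:RECOGNIZABLE.four}. The points $t\zeta$ with $1 - s_0 \le t \le 1 - r_0$ form the radial segment of $\D$ from $(1-s_0)\zeta \in \sigma_{s_0}$ to $(1-r_0)\zeta$, which lies inside $\overline{G_{s_0}}$ at Euclidean distance at least $r_0$ from $\partial\D$. So by the Koebe distortion theorem $\phi$ carries this segment to an arc that stays at distance at least some explicit positive quantity $\mu = \mu(s_0, N_0, r_0)$ from $\partial D$; here $N_0$ is exactly the ambient parameter of \cite{McNicholl.2011.b} that bounds the relevant data of $\phi$ (for instance a lower bound for $|\phi'(0)|$, equivalently for the distance from $\phi(0)$ to $\partial D$), which is all that is needed to make the distortion estimate effective. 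One then chooses how much of $A_{s_0,\zeta}$ to delete in forming $C_0$, and routes the two whiskers, so that $\overline{A_{s_0,\zeta}^+ \cap C}$ --- the union of the two closed whiskers --- stays within distance $\mu/2$ of $\partial D$; this is possible because the endpoints of $C_0$ may be taken as close along $A_{s_0,\zeta}$ to its endpoints (which lie on $\partial D$) as we wish, and because $\partial D$ is locally connected --- recall that $\phi$ has a boundary extension. The triangle inequality then gives $|\phi(t\zeta) - z| \ge \mu - \mu/2 = \mu/2$ for every such $z$ and every $t$ with $1 - s_0 \le t \le 1 - r_0$, which is condition \ref{def:RECOGNIZABLE.four} with $m(s_0,N_0,r_0) = \mu/2$ --- and this is essentially how the function $m$ is defined in \cite{McNicholl.2011.b}. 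Hence $(r_0, s_0)$ witnesses that $C$ recognizably bounds the value of $\phi$ on $\zeta$. The main obstacle is this last step: extracting a clean, effective Koebe lower bound $\mu$ for the radial segment, and then verifying that the whiskers really can be drawn inside $\overline{A_{s_0,\zeta}^+}$ within $\mu/2$ of $\partial D$ --- the one place where the merely topological local connectivity of $\partial D$ has to be handled with some care. This is precisely where the argument in \cite{McNicholl.2011.b} trades its use of a boundary connectivity function for a direct appeal to conformal distortion.
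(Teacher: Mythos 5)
The paper does not actually prove Theorem~\ref{thm:EXISTENCE}: it states it, together with Theorem~\ref{thm:BOUNDING.SIDE}, as ``the following two theorems which are proved in \cite{McNicholl.2011.b},'' so there is no internal proof to compare you against, and a faithful reconstruction would require access to the companion paper you were (by design) denied.

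Taken on its own terms, your strategy --- fix $s_0$ small enough that $\overline{A_{s_0,\zeta}^+}$ has diameter below $\epsilon$, take a subarc $C_0$ of $A_{s_0,\zeta}$ containing $\phi((1-s_0)\zeta)$ in its interior, and graft on two disjoint whiskers that run through $A_{s_0,\zeta}^+$ out to $\partial D$ --- is a very natural reading of Definition~\ref{def:RECOGNIZABLE}, and your verification of conditions~\eqref{def:RECOGNIZABLE.1}--\eqref{def:RECOGNIZABLE.3} is sound. But there are two points where the argument is not self-contained. First, condition~\eqref{def:RECOGNIZABLE.four} is a quantitative inequality against a \emph{specific} function $m(s_0, N_0, r_0)$ imported from \cite{McNicholl.2011.b}; you guess, plausibly, that $m$ is of the order of half the Koebe distortion lower bound on $\operatorname{dist}(\phi(t\zeta),\partial D)$ over the radial segment, and then arrange the whiskers to lie within $\mu/2$ of $\partial D$. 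Without the actual definition of $m$, this step is conjectural: if $m$ were defined differently (say, as the full Koebe bound, or in terms of the diameter of $C$), your whisker-placement budget could evaporate. Second, you justify the whisker placement by appealing to local connectivity of $\partial D$, but that invocation does not by itself explain how to route an arc from an interior point of $A_{s_0,\zeta}$ to $\partial D$, staying inside $A_{s_0,\zeta}^+$ and within $\mu/2$ of $\partial D$. The clean argument is to build each whisker as $\phi(\gamma)$ where $\gamma \subset \overline{\D}$ is a short arc from a point of $\sigma_{s_0}$ near an endpoint $\omega \in \partial\D$ of $\sigma_{s_0}$ out to a nearby point of $\partial\D$, lying in $G_{s_0}$ except at its terminus; uniform continuity of the boundary extension on $\overline{\D}$ (which you already used for the diameter estimate) then gives both the containment in $\overline{A_{s_0,\zeta}^+}$ and the smallness, and it also supplies the needed accessibility of $\partial D$ and distinctness of the two whisker endpoints, neither of which you address. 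Carath\'eodory's theorem makes this equivalent to your appeal to local connectivity, but the route via the boundary extension is the one that actually does the work.
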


\begin{theorem}\label{thm:BOUNDING.SIDE}
Suppose $(r_0, s_0)$ witnesses that $C$ recognizably bounds the value of $\phi$ on $\zeta$.  Then, $A_{r_0, \zeta}^+ \subseteq C^+$. 
\end{theorem}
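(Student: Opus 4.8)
The plan is to transfer the configuration to the unit disk, where the statement reduces to elementary topology of crosscuts, and to invoke the metric clause \eqref{def:RECOGNIZABLE.four} of Definition~\ref{def:RECOGNIZABLE} at only two places. The basic dictionary is that $\phi$ carries the ``cap'' $D_{r}(\zeta)\cap\D$ onto $A_{r,\zeta}^{+}$ and carries $\D\setminus\overline{D_{r}(\zeta)}$ onto $A_{r,\zeta}^{-}$, with $0$, hence $\phi(0)$, in the latter. Since $0<r_{0}<s_{0}<1/2$, the smaller cap lies inside the larger, so $A_{r_{0},\zeta}^{+}\subseteq A_{s_{0},\zeta}^{+}$ and $\overline{A_{r_{0},\zeta}^{+}}$ is disjoint from $A_{s_{0},\zeta}$; moreover $\phi((1-s_{0})\zeta)\in A_{s_{0},\zeta}$, $\phi((1-r_{0})\zeta)\in\overline{A_{r_{0},\zeta}^{+}}$, and along the radial segment $t\mapsto t\zeta$ one passes from $\D\setminus\overline{D_{s_{0}}(\zeta)}$ into $D_{s_{0}}(\zeta)\cap\D$ exactly at $t=1-s_{0}$. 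Because $A_{r_{0},\zeta}^{+}$ is connected and $D\setminus C=C^{-}\sqcup C^{+}$, it suffices to prove \textup{(a)} $A_{r_{0},\zeta}^{+}\cap C=\emptyset$ and \textup{(b)} $A_{r_{0},\zeta}^{+}\cap C^{+}\neq\emptyset$.

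First I would read off the shape of $C$ near $A_{s_{0},\zeta}$. By \eqref{def:RECOGNIZABLE.3} the set $\sigma:=C\cap A_{s_{0},\zeta}$ is a connected sub-arc of $C$; each component of $(C\cap D)\setminus\sigma$ avoids $A_{s_{0},\zeta}$ and so lies in $A_{s_{0},\zeta}^{+}$ or in $A_{s_{0},\zeta}^{-}$, and since removing the sub-arc $\sigma$ from the arc $C\cap D$ leaves at most two components, the ``two components'' in \eqref{def:RECOGNIZABLE.3} must be precisely these, both lying in $A_{s_{0},\zeta}^{+}$. Hence $C$ misses $A_{s_{0},\zeta}^{-}$ and the endpoints of $\sigma$ belong to $\overline{A_{s_{0},\zeta}^{+}\cap C}$. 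By \eqref{def:RECOGNIZABLE.2}, $\phi((1-s_{0})\zeta)\in\sigma$, while by \eqref{def:RECOGNIZABLE.four} with $t=1-s_{0}$ this point lies at distance exceeding $m(s_{0},N_{0},r_{0})>0$ from $\overline{A_{s_{0},\zeta}^{+}\cap C}$; hence it is an interior point of $\sigma$. Since $A_{s_{0},\zeta}^{-}$ is connected, disjoint from $C$, and contains $\phi(0)\in C^{-}$, we get $A_{s_{0},\zeta}^{-}\subseteq C^{-}$, so $C^{+}\subseteq A_{s_{0},\zeta}^{+}$. This also settles \textup{(a)}: since $A_{r_{0},\zeta}^{+}\subseteq A_{s_{0},\zeta}^{+}$, any point of $C\cap A_{r_{0},\zeta}^{+}$ would lie in $\overline{A_{s_{0},\zeta}^{+}\cap C}$; but by \eqref{def:RECOGNIZABLE.four} with $t=1-r_{0}$ the point $\phi((1-r_{0})\zeta)\in\overline{A_{r_{0},\zeta}^{+}}$ lies farther than $m(s_{0},N_{0},r_{0})$ from $\overline{A_{s_{0},\zeta}^{+}\cap C}$, so provided $m(s_{0},N_{0},r_{0})\ge\diam\overline{A_{r_{0},\zeta}^{+}}$, the set $\overline{A_{r_{0},\zeta}^{+}}$ is entirely disjoint from $\overline{A_{s_{0},\zeta}^{+}\cap C}$, whence $A_{r_{0},\zeta}^{+}\cap C=\emptyset$.

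For \textup{(b)} I would look at the radial arc $\Gamma:=\phi(\{\,t\zeta:1-s_{0}\le t\le 1-r_{0}\,\})$. It lies in $\overline{A_{s_{0},\zeta}^{+}}$; it meets $A_{s_{0},\zeta}$ only at $\phi((1-s_{0})\zeta)$, since the segment meets $\partial D_{s_{0}}(\zeta)$ only at $(1-s_{0})\zeta$ and $\phi$ is injective on $\D$; and by \eqref{def:RECOGNIZABLE.four} it avoids $\overline{A_{s_{0},\zeta}^{+}\cap C}\supseteq A_{s_{0},\zeta}^{+}\cap C$. Hence $\Gamma\cap C=\{\phi((1-s_{0})\zeta)\}$. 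Now, because $\phi((1-s_{0})\zeta)$ is an interior point of $\sigma$ both within $C$ and within $A_{s_{0},\zeta}$, for a small enough ball $B$ about it one has $C\cap B=A_{s_{0},\zeta}\cap B$, so $B\setminus C=(A_{s_{0},\zeta}^{+}\cap B)\sqcup(A_{s_{0},\zeta}^{-}\cap B)$; since $A_{s_{0},\zeta}^{-}\cap B\subseteq C^{-}$ while $\phi((1-s_{0})\zeta)$, being on the crosscut $C$, lies in $\overline{C^{+}}$, the remaining piece $A_{s_{0},\zeta}^{+}\cap B$ must lie in $C^{+}$. For small $\varepsilon>0$ the point $\phi((1-s_{0}+\varepsilon)\zeta)$ lies in $A_{s_{0},\zeta}^{+}\cap B$, hence in $C^{+}$; and as $\Gamma\setminus\{\phi((1-s_{0})\zeta)\}$ is connected and disjoint from $C$, all of it --- in particular $\phi((1-r_{0})\zeta)$ --- lies in $C^{+}$. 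Since $C^{+}$ is open and $\phi((1-r_{0})\zeta)\in\overline{A_{r_{0},\zeta}^{+}}$, some neighborhood of $\phi((1-r_{0})\zeta)$ contained in $C^{+}$ meets $A_{r_{0},\zeta}^{+}$, proving \textup{(b)}. With \textup{(a)} and the connectedness of $A_{r_{0},\zeta}^{+}$, this yields $A_{r_{0},\zeta}^{+}\subseteq C^{+}$.

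The main obstacle is step \textup{(a)}: it depends on the quantitative clause \eqref{def:RECOGNIZABLE.four} together with the inequality $m(s_{0},N_{0},r_{0})\ge\diam\overline{A_{r_{0},\zeta}^{+}}$ used above, which I expect is built into the definition of $m$. Concretely, $N_{0}$ should carry enough information about $\phi$ --- a modulus of uniform continuity on $\overline{\D}$, or a distortion bound extracted from a bound on $D$ --- to give an a priori upper bound on the diameter of the image of the cap $D_{r_{0}}(\zeta)\cap\D$, and $m(s_{0},N_{0},r_{0})$ should be chosen no smaller than that bound. Everything else is crosscut separation in $\D$ and continuity of the boundary extension, which is routine save for the one degeneracy that clause \eqref{def:RECOGNIZABLE.four} at $t=1-s_{0}$ rules out for free, namely that $\phi((1-s_{0})\zeta)$ might otherwise be an endpoint rather than an interior point of $\sigma$ --- a possibility that the local argument in \textup{(b)} must exclude.
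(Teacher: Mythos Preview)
The paper does not prove Theorem~\ref{thm:BOUNDING.SIDE}. Both Theorems~\ref{thm:EXISTENCE} and~\ref{thm:BOUNDING.SIDE} are quoted from \cite{McNicholl.2011.b} (see the sentence introducing them), and the quantity $m(s_0,N_0,r_0)$ appearing in clause~\eqref{def:RECOGNIZABLE.four} of Definition~\ref{def:RECOGNIZABLE} is never defined in the present paper---it, along with the parameter $N_0$, is carried over from that reference. So there is nothing here against which to compare your argument.

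Assessed on its own terms, your architecture is the right one: reducing to \textup{(a)} $A_{r_0,\zeta}^{+}\cap C=\emptyset$ and \textup{(b)} $A_{r_0,\zeta}^{+}\cap C^{+}\neq\emptyset$, reading off from clause~\eqref{def:RECOGNIZABLE.3} that the two tails of $C\setminus\sigma$ both lie in $A_{s_0,\zeta}^{+}$ (so $C$ avoids $A_{s_0,\zeta}^{-}$ and hence $A_{s_0,\zeta}^{-}\subseteq C^{-}$), and using clause~\eqref{def:RECOGNIZABLE.four} at $t=1-s_0$ to force $\phi((1-s_0)\zeta)$ into the interior of $\sigma$. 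The local claim in \textup{(b)} that $C\cap B=A_{s_0,\zeta}\cap B$ is correct, though it deserves one more line: once a sub-arc of $C$ around the point lies in the simple arc $A_{s_0,\zeta}$, the map $a^{-1}\circ c$ (for parametrizations $a,c$) is a continuous injection of an interval into an interval, so its image is an open sub-interval of the parameter domain of $A_{s_0,\zeta}$, and shrinking $B$ gives equality.

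The genuine gap is exactly the one you isolate. Step~\textup{(a)} hinges on the inequality $m(s_0,N_0,r_0)\ge\diam\overline{A_{r_0,\zeta}^{+}}$, which you do not prove but conjecture is built into the definition of $m$. That conjecture is reasonable---your reading of $N_0$ as carrying a modulus-of-continuity datum for $\phi$ is the natural one---but since $m$ and $N_0$ live only in \cite{McNicholl.2011.b}, the present paper gives you no way to close this. As written, the argument is a correct reduction of Theorem~\ref{thm:BOUNDING.SIDE} to a specific quantitative property of $m$, not a self-contained proof.
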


We now discuss approximation of crosscuts.  

A finite sequence of sets $(S_1, \ldots, S_n)$ is a \emph{chain} if $S_j \cap S_{j+1} \neq \emptyset$ whenever $1 \leq j < n$.  In addition, $(S_1, \ldots, S_n)$ is a \emph{simple chain} if $S_j \cap S_k \neq \emptyset$ only when $|j - k| = 1$.   We then define a \emph{wad} to be a union of a chain of open rational boxes and an \emph{approximate arc} to be a simple chain of wads.  

When $A_1, \ldots, A_n$ are subarcs of an arc $A$, we write $A = A_1 + \ldots + A_n$ if $A_{j+1}$ contains exactly one point of $A_j$ whenever $1 \leq j < n$.  
An approximate arc $(w_1, \ldots, w_n)$ \emph{approximates} an arc $A$ if $A$ can be decomposed into a sum $A = A_1 + \ldots + A_n$ such that 
$A_j \subseteq w_j$ for all $j$.  Equivalently, if there are numbers $0 = t_0 < \ldots < t_n = 1$ such that $A$ maps each number in $[t_{j-1}, t_j]$ into $w_j$.  The largest diameter of a wad $w_j$ will be referred to as the \emph{error} in this approximation. 

We define an \emph{approximate crosscut} of $D$ to be an approximate arc $(w_1, \ldots, w_n)$ such that 
\begin{itemize}
	\item $\overline{w_j} \subseteq D$ when $1 < j < n$, and 
	
	\item $w_j \cap \partial D \neq \emptyset$ if $j = 1, n$.
\end{itemize}

We will use the following Lemma from \cite{McNicholl.2011.b}.

\begin{lemma}\label{lm:UNIMODULAR}
Suppose $|\zeta| = 1$ and $(s_0, r_0)$ witnesses that a crosscut $C$ recognizably bounds the value of $\phi$ on $\zeta$.  Suppose $C$ is approximated by $(w_1, \ldots, w_n)$.   Then, whenever $\zeta'$ is a unimodular point that is sufficiently close to $\zeta$, $(w_1, \ldots, w_n)$ approximates a crosscut $C'$ such that $(s_0, r_0)$ witnesses that $C'$ recognizably bounds the value of $\phi$ on $\zeta'$.  
\end{lemma}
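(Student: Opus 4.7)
The plan is to construct $C'$ as a small, localized perturbation of $C$ near $\phi((1-s_0)\zeta)$, chosen so that $C'$ passes through $\phi((1-s_0)\zeta')$ but agrees with $C$ outside a tiny disk $B$ contained in a single wad. Each of the four conditions in Definition \ref{def:RECOGNIZABLE} is either numerical, a membership condition holding by construction, an open strict inequality whose constant is independent of $\zeta$, or a topological transversality condition; once the construction is in place, all four should persist for $\zeta'$ near $\zeta$.

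Concretely, fix a decomposition $C = A_1 + \cdots + A_n$ with $A_j \subseteq w_j$, perturbing the breakpoints slightly if necessary so that $\phi((1-s_0)\zeta)$ lies in the relative interior of some $A_{j_0}$. Because $\phi((1-s_0)\zeta) \in D$ and $w_{j_0}$ is open, I would choose a closed disk $B$ centered at $\phi((1-s_0)\zeta)$ with $B \subseteq w_{j_0} \cap D$, small enough that $B$ avoids the endpoints of $A_{j_0}$. Continuity of $\phi$ yields an open neighborhood $V$ of $\zeta$ on the unit circle such that $\phi((1-s_0)\zeta') \in \Int(B)$ for $\zeta' \in V$. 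For such $\zeta'$, replace the subarc of $A_{j_0}$ lying in $B$ by a simple arc in $B$ with the same entry and exit points that passes through $\phi((1-s_0)\zeta')$, call the resulting subarc $A_{j_0}'$, and set $C' = A_1 + \cdots + A_{j_0-1} + A_{j_0}' + A_{j_0+1} + \cdots + A_n$. Then $C'$ is a crosscut of $D$ with the same endpoints on $\partial D$ as $C$, and the original chain $(w_1,\ldots,w_n)$ still approximates it.

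Conditions (1) and (2) now hold trivially or by construction. For condition (4) the hypothesis provides positive slack $\delta$ in the strict inequality; uniform continuity of $\phi$ on compact subsets of $\overline{\D}$, together with the Hausdorff continuity of both $A_{s_0,\zeta'}$ and $C'$ as functions of $\zeta'$ (the latter because the modification of $C$ is confined to $B$, which may be shrunk arbitrarily), lets us absorb all perturbation errors into $\delta$ for $\zeta'$ sufficiently close to $\zeta$.

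The main obstacle is condition (3). After possibly shrinking $B$ further I would assume $A_{s_0,\zeta} \cap C \subseteq B$; Hausdorff continuity then forces $A_{s_0,\zeta'} \cap C' \subseteq B$ for $\zeta'$ close to $\zeta$, reducing the problem to a local analysis inside $B$ of two simple arcs meeting at $\phi((1-s_0)\zeta')$. The $+$-side of any crosscut, and of $A_{s_0,\zeta'}$, is the unique component of the complement in $D$ not containing $\phi(0)$, a property stable under small Hausdorff perturbations, so the local combinatorics of $C'$ and $A_{s_0,\zeta'}$ inside $B$ match those of $C$ and $A_{s_0,\zeta}$; this yields both the connectedness of $C' \cap A_{s_0,\zeta'}$ and the two-component structure of $C' \cap A_{s_0,\zeta'}^+$.
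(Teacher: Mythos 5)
First, a structural remark: this paper does not prove Lemma \ref{lm:UNIMODULAR} at all --- it is quoted from \cite{McNicholl.2011.b} --- so there is no in-paper argument to compare yours against, and your proposal has to stand on its own. Your overall strategy (perturb $C$ inside a single wad so that it passes through $\phi((1-s_0)\zeta')$, then argue that each clause of Definition \ref{def:RECOGNIZABLE} is stable under small perturbations of $\zeta$) is reasonable, and clauses (1) and (2) are fine. For clause (4) you correctly identify that the strict inequality has uniform slack, though you assert rather than prove that $\overline{A_{s_0,\zeta'}^+\cap C'}$ varies upper semicontinuously in $\zeta'$; this is not automatic from Hausdorff continuity of $A_{s_0,\zeta'}$, since side membership is a global topological condition, and it really should be argued via the preimage picture in $\D$, where $A_{s_0,\zeta'}^{\pm}$ correspond under $\phi$ to $\D\cap D_{s_0}(\zeta')$ and its complement.

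The genuine gap is in clause (3). You write that ``after possibly shrinking $B$ further'' you may assume $A_{s_0,\zeta}\cap C\subseteq B$. Nothing in the hypotheses gives this: clause (3) only says $C\cap A_{s_0,\zeta}$ is \emph{connected}, so it may be a nondegenerate subarc containing $\phi((1-s_0)\zeta)$, and no small disk centered at that point contains it --- shrinking $B$ cannot help. Without this localization, the reduction to ``a local analysis inside $B$'' fails, because $C'$ and $A_{s_0,\zeta'}$ may interact far from $B$. Moreover, even where the interaction is confined to $B$, your replacement arc $A_{j_0}'$ is constrained only to lie in $B$, to have the right entry and exit points, and to pass through $\phi((1-s_0)\zeta')$; such an arc can cross $A_{s_0,\zeta'}$ many times, making $C'\cap A_{s_0,\zeta'}$ disconnected and giving $C'\cap A_{s_0,\zeta'}^+$ more than two components. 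The sentence ``the local combinatorics of $C'$ and $A_{s_0,\zeta'}$ inside $B$ match those of $C$ and $A_{s_0,\zeta}$'' is precisely the statement that needs proof, and proving it requires constructing $A_{j_0}'$ \emph{with reference to} $A_{s_0,\zeta'}$ (for instance forcing a single transversal crossing, or routing $A_{j_0}'$ along a subarc of $A_{s_0,\zeta'}$), not choosing an arbitrary arc through the new point. There is also a minor unaddressed issue that $C$ may meet $B$ in several components, so ``the subarc of $A_{j_0}$ lying in $B$'' is not well defined and the new arc could intersect other strands of $C$, violating simplicity. As it stands the proposal does not establish clause (3), so the proof is incomplete.
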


The following definition is also from \cite{McNicholl.2011.b}.

\begin{definition}\label{def:DESCRIBES}
Suppose that $\mathcal{C}$ is a set of crosscuts of $D$ and that $\mathcal{A}$ is a set of approximate crosscuts.  We say that $\mathcal{A}$ \emph{describes} $\mathcal{C}$ if the following two conditions are met.
\begin{enumerate}
	\item Every approximate crosscut in $\mathcal{A}$ approximates a crosscut in $\mathcal{C}$.
	
	\item Every crosscut in $\mathcal{C}$ can be approximated with arbitrarily small error by an approximate crosscut in $\mathcal{A}$.  That is, if $C$ is a crosscut in $\mathcal{C}$, and if $\epsilon > 0$, then $C$ is approximated by an approximate crosscut in $\mathcal{A}$ with error smaller than $\epsilon$.
\end{enumerate}
\end{definition}

The following is a porism of the proof of Theorem 3.7 of \cite{McNicholl.2011.b}.

\begin{theorem}\label{thm:ALPHA}\label{thm:DESCRIBES.CE}
For all rational numbers $s_0, r_0$ and unimodular $\zeta$, there is a  set of approximate crosscuts $\mathcal{A}_{(s_0, r_0, \zeta)}$ that describes $\mathcal{C}_{(s_0, r_0, \zeta)}$.   Furthermore, $\mathcal{A}_{(s_0, r_0, \zeta)}$ is computably enumerable uniformly in $(s_0, r_0, \zeta)$.
\end{theorem}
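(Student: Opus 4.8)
The plan is to prove Theorem~\ref{thm:DESCRIBES.CE} by assembling a single semi-decision procedure that enumerates approximate crosscuts and verifies, for each candidate, that it witnesses the defining conditions (\ref{def:RECOGNIZABLE.1})--(\ref{def:RECOGNIZABLE.four}) of Definition~\ref{def:RECOGNIZABLE}. The underlying idea is that each of those four conditions is an \emph{open} condition on the finite combinatorial data of an approximate crosscut together with finitely many rational points, so each can be confirmed in finite time when it genuinely holds; conversely Theorem~\ref{thm:EXISTENCE} guarantees there is enough material in $\mathcal{C}_{(s_0,r_0,\zeta)}$ to be described. Concretely, I would first fix an effective enumeration of all approximate crosscuts $(w_1,\dots,w_n)$ --- these are finite tuples of wads, each a finite union of open rational boxes, so they form a computable set that can be listed uniformly. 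For each such tuple I run, in a dovetailed fashion, a battery of searches.

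The searches implement the four clauses. For clause~(\ref{def:RECOGNIZABLE.1}) we simply check $0 < r_0 < s_0 < 1/2$, which is decidable. For clause~(\ref{def:RECOGNIZABLE.2}), membership $\phi((1-s_0)\zeta)\in C$: since $\phi$ is computable we can approximate $\phi((1-s_0)\zeta)$ and, if this point lies in the interior of some box of some wad $w_j$, confirm it; more carefully, what we actually verify is that $\phi((1-s_0)\zeta)$ lies in $\bigcup_j w_j$, which suffices once the approximate crosscut genuinely approximates a crosscut through that point. For clauses~(\ref{def:RECOGNIZABLE.3}) and~(\ref{def:RECOGNIZABLE.four}) we use the computability of $\phi$ on $\overline{\D}\cap\partial D_{s_0}(\zeta)$ to enumerate finitely many sample images $\phi(z)$ with $z\in\overline{\D}\cap\partial D_{s_0}(\zeta)$, thereby producing a shrinking approximate cover of $A_{s_0,\zeta}$; from such a cover together with the wads we can, when the situation is genuinely as described, verify the connectivity statement that $C\cap A_{s_0,\zeta}$ is connected while $C\cap A_{s_0,\zeta}^+$ has two components (this is a combinatorial fact about how the wads overlap the cover of the annulus arc), and verify the strict inequality in~(\ref{def:RECOGNIZABLE.four}) by sampling $\phi(t\zeta)$ for finitely many $t\in[1-s_0,1-r_0]$ to within a tolerance that, by uniform continuity, certifies the separation $m(s_0,N_0,r_0)$ from the relevant closed piece of $C$. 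Putting $(w_1,\dots,w_n)$ into $\mathcal{A}_{(s_0,r_0,\zeta)}$ exactly when all these searches succeed gives a c.e.\ set, uniformly in $(s_0,r_0,\zeta)$.

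It then remains to check the two clauses of Definition~\ref{def:DESCRIBES}. Soundness --- every approximate crosscut we enumerate approximates an actual crosscut in $\mathcal{C}_{(s_0,r_0,\zeta)}$ --- follows because the open/strict conditions we have certified on the approximating data persist to any arc the approximate crosscut approximates; here Lemma~\ref{lm:UNIMODULAR} and the stability built into Definition~\ref{def:RECOGNIZABLE} do the work. Completeness --- every $C\in\mathcal{C}_{(s_0,r_0,\zeta)}$ is approximated with arbitrarily small error by something we enumerate --- follows because, given $C$ and $\epsilon>0$, one can subdivide $C$ finely and thicken to rational boxes, obtaining an approximate crosscut of error $<\epsilon$ that still inherits all four strict conditions from $C$ with a little room to spare, so our verification procedure accepts it.

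The main obstacle I anticipate is clause~(\ref{def:RECOGNIZABLE.3}), the combinatorial detection of the connectivity pattern of $C$ relative to the arc $A_{s_0,\zeta}$ and its plus-side $A_{s_0,\zeta}^+$: we never have $C$ or $A_{s_0,\zeta}$ exactly, only finite overlapping box-covers, and we must ensure that the ``two connected components'' phenomenon is genuinely reflected in, and only in, covers that come from true members of $\mathcal{C}_{(s_0,r_0,\zeta)}$ --- a spurious pattern in a coarse cover must not slip through. This is precisely where the proof of Theorem~3.7 of \cite{McNicholl.2011.b} already did the analysis, and the present theorem is stated as a porism of that proof; so the task is really to observe that none of that combinatorial bookkeeping used the boundary connectivity function, and that the parameters $(s_0,r_0,\zeta)$ enter only as rational/computable inputs to an otherwise unchanged construction, yielding the asserted uniformity.
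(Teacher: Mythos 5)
The paper gives no proof here at all: the theorem is simply declared to be a porism of the proof of Theorem~3.7 in \cite{McNicholl.2011.b}, the point being that nothing in that argument invoked the boundary connectivity function, so the construction of $\mathcal{A}_{(s_0,r_0,\zeta)}$ and its properties carry over unchanged. Your closing paragraph correctly identifies this as the substance of the claim, so you have in fact located the paper's actual argument.

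That said, your reconstruction of the underlying proof has a real gap on the soundness side of Definition~\ref{def:DESCRIBES}: you need every $(w_1,\dots,w_n)$ that passes your verification battery to actually approximate \emph{some} crosscut in $\mathcal{C}_{(s_0,r_0,\zeta)}$, and ``the open/strict conditions persist to any arc the approximate crosscut approximates'' does not establish this. Passing the tests only certifies that the finite box data is \emph{consistent with} such a crosscut; it does not produce one, and a coarse wad-cover could exhibit the required intersection pattern with $A_{s_0,\zeta}$ and $A_{s_0,\zeta}^+$ without any genuine crosscut through $\phi((1-s_0)\zeta)$ sitting inside it and satisfying all four clauses of Definition~\ref{def:RECOGNIZABLE}. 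Your ``main obstacle'' remark flags clause~(\ref{def:RECOGNIZABLE.3}) and then defers to the earlier proof rather than closing the gap. A self-contained argument would have to (a) state the verification conditions precisely enough that they are simultaneously semi-decidable and strong enough to \emph{construct} a witnessing arc inside the wads, and (b) check uniformity in $(s_0,r_0,\zeta)$ as rational and computable data --- which is exactly the content of Theorem~3.7 of \cite{McNicholl.2011.b} and why the present paper offers only the porism citation in place of a proof.
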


When $z,w$ are distinct points in the plane, let $[z,w]$ denote the line segment from $z$ to $w$; let $[z,z] = \{z\}$.  Let $\diam(C)$ denote the diameter of $C$.  

\section{The construction of the domain $D$}\label{sec:CONSTRUCTION}

We begin by constructing a subset of the plane $X$.  This set will be the boundary of $D$ and consists of a union of countably many line segments.  We will choose these line segments so as to encode a computably enumerable but incomputable set into any modulus of local connectivity for $X$.  

Let $A$ be a computably enumerable but incomputable set of natural numbers.  Since $A$ is computably enumerable, it is the domain of a Turing computable function that maps natural numbers to natural numbers.  Let $f$ be such a function, and let $M$ be a Turing machine that computes $f$.  We say that a number $n$ \emph{enters $A$ at stage $s$} if $M$ halts on input $n$ after exactly $s$ steps; in this case we write $n \in A_{\mbox{at $s$}}$.

Let:
\begin{eqnarray*}
\nu_0 & = & 0\\
\nu_1 & = & i\\
\nu_2 & = & 1 + i\\
\nu_3 & = & 1\\
\end{eqnarray*}
Let:
\begin{eqnarray*}
\nu_{3j + 4} & = & \left\{
\begin{array}{cc}
2^{-(j+1)} + 2^{-(j + 3 + s)} & \mbox{if $j \in A_{\mbox{at $s$}}$}\\
2^{-(j+1)} & \mbox{if $j \not \in A$}\\
\end{array}
\right.\\
\ & \ & \ \\
\nu_{3j + 5} & = & 2^{-(j+1)}(1 + i)\\
\ & \ & \ \\
\nu_{3j + 6} & = & \left\{
\begin{array}{cc}
2^{-(j+1)} - 2^{-(j + 3 + s)} & \mbox{if $j \in A_{\mbox{at $s$}}$}\\
2^{-(j+1)} & \mbox{if $j \not \in A$}\\
\end{array}
\right.
\end{eqnarray*}
Let
\[
X = \bigcup_{n \in \N} [\nu_n, \nu_{n+1}].
\]
The set $\C - X$ has two connected components, only one of which is bounded.  Let $D$ denote the bounded component.  

We call $\nu_0$, $\nu_1$, $\ldots$ the \emph{vertices} of $D$.  Fix $j \in \N$.  If $\nu_{3j + 4} \neq \nu_{3j + 6}$, then we call $[\nu_{3j+4}, \nu_{3j + 5}] \cup [\nu_{3j + 5}, \nu_{3j + 6}]$ a \emph{tent}; otherwise we refer to it as a \emph{spike}.  See Figures \ref{fig:n_in_A} and \ref{fig:n_not_in_A}.  The \emph{significant constituents} of the boundary of $D$ consist of the tents, spikes, and the line segments in the boundary of $D$ that are included in a side of the unit square. 
\begin{figure}[!h]
\resizebox{3in}{3in}{\includegraphics{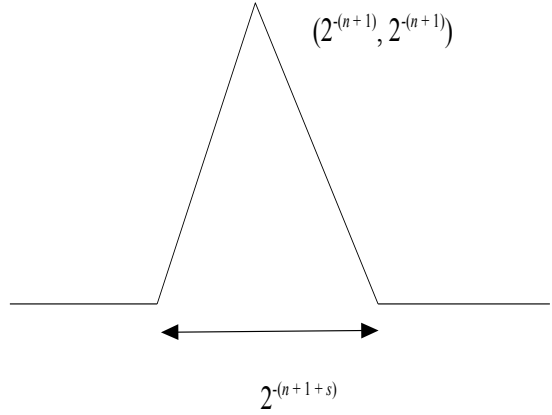}}
\caption{Construction of $X$: the case where $n \in A$}\label{fig:n_in_A}
\end{figure}
 \begin{figure}
\resizebox{3.5in}{3in}{\includegraphics{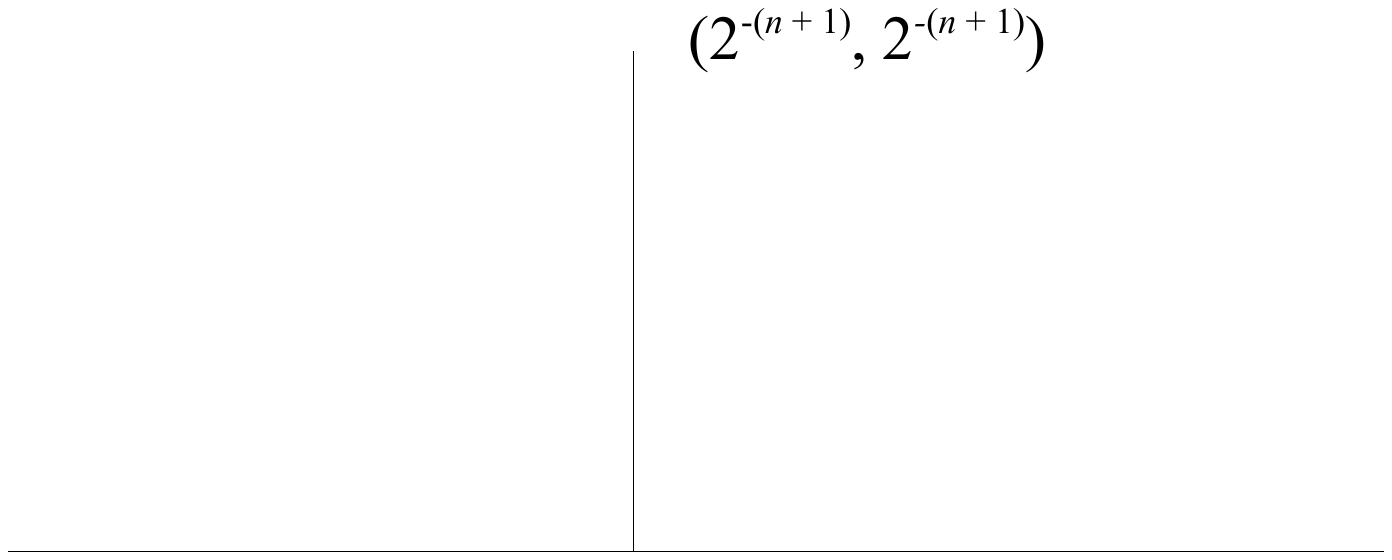}}
\caption{Construction of $X$: the case where $n \not \in A$}\label{fig:n_not_in_A}
\end{figure}

%

The set $D$ is simply connected and $X$ is the boundary of $D$.  It follows from the construction that $D$ is computably open and $X$ is computably closed.  So, by the Effective Riemann Mapping Theorem of Hertling \cite{Hertling.1999}, there is a computable and conformal map $\phi$ of $\D$ onto $D$.  The set $X$ is locally connected, so $\phi$ has a boundary extension (see Pommerenke \cite{Pommerenke.1992}).  We denote this boundary extension by $\phi$ as well.

\section{The verification of the construction}\label{sec:VERIFICATION}

We divide the verification into three layers: the classical world, wherein we have full access to all the objects and methods of classical mathematics, the approximate world wherein we can only view approximations of classical objects, and the computable world in which only computable operations with approximations are permitted.

\subsection{The classical world}\label{subset:CLASSICAL}.

Our first goal is to define the class of acceptable crosscuts.  

Suppose that $z_1,z_2$ are points in the plane.  Each of the sets
\[
[z_1, \Re(z_1) + \Im(z_2) i] \cup [\Re(z_1) + \Im(z_2) i, z_2], 
[z_1, \Re(z_2) + \Im(z_1) i] \cup [\Re(z_2) + \Im(z_1) i, z_2]
\]
will be called a \emph{taxicab arc} from $z_1$ to $z_2$.  So, each taxicab arc is either a horizontal line segment, a vertical line segment, or an arc that consists of a  union of a horizontal line segment and a vertical line segment that meet at a right angle.

\begin{definition}\label{def:ACCEPTABLY.PLACED}
Suppose $z_1$ and $z_2$ are distinct boundary points of $D$.  We say that $z_1, z_2$ are \emph{acceptably placed} if neither is a vertex and if at least one taxicab arc from $z_1$ to $z_2$ contains no point of $D$.
\end{definition}

\begin{definition}\label{def:ACCEPTABLE}
Let $C$ be a crosscut of $D$.  We say that $C$ is \emph{acceptable} if its endpoints are acceptably placed and its diameter is smaller than 
\[
\frac{|\phi(0) - \phi(\zeta/2)|}{1 + \sqrt{2}}
\]
for each unimodular $\zeta$.
\end{definition}
%

It follows that if $z$ is a boundary point of $D$ that is not a vertex, then $z$ belongs to exactly one significant constituent of the boundary of $D$.

\begin{lemma}\label{lm:ACCEPTABLY.PLACED}
Let $z,z'$ be distinct boundary points of $D$, and suppose that neither of these points is a vertex.  Let $\sigma$ be the significant constituent of the boundary of $D$ that contains $z$, and let $\sigma'$ be the significant constituent of the boundary of $D$ that contains $z'$.  Then, $z, z'$ are acceptably placed if and only if $w, w'$ are acceptably placed whenever $w \in \sigma$ and $w' \in \sigma'$ are distinct and are not vertices.
\end{lemma}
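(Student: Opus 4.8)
The plan is to reduce the statement to a purely local analysis of the finitely many shapes that occur in the boundary $X$. The key observation is that the property ``$z_1,z_2$ are acceptably placed'' depends only on which significant constituents contain $z_1$ and $z_2$, together with the way those constituents sit relative to $D$ and to each other. So I would first catalog the significant constituents: the four sides of the unit square (or the parts of them that survive in $\partial D$), and, for each $j$, the tent or spike attached near $2^{-(j+1)}$. Each of these is either a single horizontal segment, a single vertical segment, or a ``$\vee$''-shaped union of two segments meeting at a right angle at the apex $\nu_{3j+5} = 2^{-(j+1)}(1+i)$; in all cases the constituent is itself (a subset of) a taxicab arc, and it lies on a known side of $D$ (the interior of $D$ is locally on one definite side of each edge).

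Next I would prove the ``if'' direction, which is immediate: if the conclusion holds for \emph{all} non-vertex $w \in \sigma$, $w' \in \sigma'$, then in particular it holds for $w = z$, $w' = z'$. The substance is the ``only if'' direction, i.e.\ that acceptable placement is constant as $z$ ranges over the non-vertex points of a single significant constituent (and likewise for $z'$). For this I would argue as follows. Fix $\sigma$ and $\sigma'$ and suppose $z,z'$ are acceptably placed, witnessed by a taxicab arc $\tau$ from $z$ to $z'$ disjoint from $D$. Given another pair $w \in \sigma$, $w' \in \sigma'$ (non-vertices), I construct a taxicab arc from $w$ to $w'$ disjoint from $D$ by concatenating: a sub-arc of $\sigma$ from $w$ to $z$ (which lies in $X \subseteq \partial D$, hence meets $D$ in no point), the arc $\tau$, and a sub-arc of $\sigma'$ from $z'$ to $w'$. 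Because $\sigma$ and $\sigma'$ are horizontal/vertical/right-angled, the total concatenation can be rerouted into a genuine taxicab arc from $w$ to $w'$ — here one uses that moving the endpoint of a horizontal (resp.\ vertical) segment along that same horizontal (resp.\ vertical) line does not change which region the corresponding taxicab arc passes through, up to a controlled local modification near the apex of a tent. The geometry of the specific construction (all coordinates of the form $2^{-k}$ or $2^{-k} \pm 2^{-m}$, tents strictly nested inside the unit square, consecutive tents disjoint) guarantees that this rerouting can be done without entering $D$.

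The main obstacle I anticipate is the bookkeeping at the apex of a tent, and the interaction between a tent and the vertical line $x = 2^{-(j+1)}$ that passes through its apex: a taxicab arc landing on one leg of the ``$\vee$'' versus the other leg may need its horizontal/vertical decomposition chosen differently, and one must check that the ``bad'' choice can always be avoided, or that when it cannot, the pair $z,z'$ was not acceptably placed in the first place — so that the dichotomy is genuinely a property of $(\sigma,\sigma')$ and not of the individual points. I would handle this by a finite case split on the mutual position of $\sigma$ and $\sigma'$: (a) both are tents/spikes on the same ``side'' (both near the real axis or both near the top edge of the square), (b) one is a tent/spike and the other is a side of the unit square, (c) both are sides of the unit square, plus the degenerate sub-cases where $\sigma$ and $\sigma'$ are legs of the \emph{same} tent. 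In each case the set of non-vertex points of $\sigma$ (resp.\ $\sigma'$) from which an admissible taxicab arc to the other constituent exists is either all of them or none of them, by the rerouting argument above, which completes the proof. Throughout I would lean on the fact, noted just before the lemma, that a non-vertex boundary point lies in exactly one significant constituent, so ``the'' $\sigma$ and $\sigma'$ are well defined.
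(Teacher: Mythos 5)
Your high-level plan agrees with the paper's --- the ``if'' direction is trivial, and ``only if'' reduces to showing that acceptable placement depends only on the pair $(\sigma,\sigma')$, handled by a finite case split on their mutual positions --- but the geometry you build the rerouting step on is wrong, and the step has a genuine gap.

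You claim that each significant constituent is (a subset of) a taxicab arc, in particular that a tent is two axis-aligned segments meeting at a right angle at the apex. That is false. By construction the apex of the $j$-th tent is at $(2^{-(j+1)}, 2^{-(j+1)})$ while its feet lie on the real axis at $(2^{-(j+1)} \pm 2^{-(j+3+s)}, 0)$; the half-width $2^{-(j+3+s)}$ is at most a quarter of the height $2^{-(j+1)}$, so the legs are slanted and the apex angle is acute, not right. A tent is not contained in any taxicab arc, so concatenating a sub-arc of $\sigma$ with $\tau$ and a sub-arc of $\sigma'$ does not produce something axis-aligned, and your assertion that it ``can be rerouted into a genuine taxicab arc'' because ``$\sigma$ and $\sigma'$ are horizontal/vertical/right-angled'' rests on a false premise. (What actually permits shortcutting across a tent is that the open triangle between the tent and the real-axis segment beneath it lies outside $D$, but you never invoke this.) There are further slips that suggest the picture is off: all tents attach to the lower edge, none is ``near the top edge of the square''; the two legs of a tent form a single significant constituent, not two, so ``legs of the same tent'' is just the case $\sigma=\sigma'$; and the lower edge does not survive as one constituent but is broken into countably many segments interleaved with the tents and spikes.

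The paper's proof avoids rerouting altogether. From the acceptable placement of $z,z'$ it reads off directly which pairs $(\sigma,\sigma')$ can occur --- equal; one a side of the square with the other lying in an adjacent side; both contained in the lower edge; or one a tent or spike, which forces the other to lie in the lower edge --- and in each case the acceptable placement of arbitrary non-vertex $w\in\sigma$, $w'\in\sigma'$ is then immediate from the position of the constituents. That positional dichotomy, rather than arc surgery along $\sigma$ and $\sigma'$, is the argument you should make.
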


\begin{proof}
Suppose $z,z'$ are acceptably placed.   Let $w$ be point in $\sigma$ that is not a vertex, and let $w'$ be a point $\sigma'$ that is not a vertex.  Suppose $w \neq w'$.  We show that $w,w'$ are acceptably placed.  That is, at least one taxicab arc from $w$ to $w'$ contains no point of $D$.

If $\sigma = \sigma'$, then it follows by inspection that at least one taxicab arc from $w$ to $w'$ contains no point of $D$.  So, suppose $\sigma \neq \sigma'$.

Suppose one of $\sigma, \sigma'$ is a side of the unit square.  Since $z,z'$ are acceptably placed, it follows that an adjacent side includes the other significant constituent.  It then follows that $w,w'$ are acceptably placed.  So, suppose that neither of $\sigma$, $\sigma'$ is a side of the unit square.  
If the lower side of the unit square includes both $\sigma$ and $\sigma'$, then $w,w'$ are acceptably placed.  So, suppose one of $\sigma$, $\sigma'$ is not included in the lower side of the unit square; that is it is either a tent or a spike.  Since $z,z'$ are acceptably placed, it follows that the other one of these significant constituents must be included in the lower side.  Thus, $w$ and $w'$ are acceptably placed.
\end{proof}

We now show that acceptable crosscuts that recognizably bound the value of $\phi$ on $\zeta$ allow us to estimate the value of $\phi$ on all points near $\zeta$ without the use of a boundary connectivity function.  This is based on the following two results.

 \begin{proposition}\label{prop:INTERIOR}
 Let $C$ be a crosscut of $D$.  Let $\tau$ be an arc that joins the endpoints of $C$ and that contains no point of $D$.  Then, the interior of $\tau \cup C$ includes exactly one side of $C$.
 \end{proposition}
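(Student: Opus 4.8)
The plan is to realize $J := \tau \cup C$ as a Jordan curve and then to show that the two sides of $C$ fall one into the bounded complementary component of $J$ (which is what ``the interior of $\tau \cup C$'' refers to) and one into the unbounded component, so that exactly one side is included in the former.

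First I would check that $J$ is a Jordan curve. Since $C$ is a crosscut, its endpoints $a,b$ are distinct boundary points of $D$ and every point of $C$ other than $a,b$ lies in $D$. By hypothesis $\tau$ joins $a$ to $b$ and $\tau \cap D = \emptyset$; hence any point of $\tau \cap C$ other than $a$ or $b$ would be a point of $C \setminus \{a,b\} \subseteq D$, contradicting $\tau \cap D = \emptyset$. Therefore $\tau \cap C = \{a,b\}$, so $J$ is a union of two simple arcs meeting exactly at their common endpoints, i.e.\ a Jordan curve. By the Jordan curve theorem, $\C \setminus J$ has exactly two components, the bounded one $\Int(J)$ and the unbounded one $\Ext(J)$, and $J$ is the common boundary of both.

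Next I would observe that each side of $C$ is contained in a single component of $\C \setminus J$. If $S$ is one of $C^+, C^-$, then $S \subseteq D$, so $S$ is disjoint from $\tau$; and $S$ is disjoint from $C$ by the definition of a side. Thus $S \subseteq \C \setminus J = \Int(J) \cup \Ext(J)$, and since $S$ is connected it lies entirely in $\Int(J)$ or entirely in $\Ext(J)$. To see that the two sides land in \emph{different} components, pick a point $p$ of $C$ that is not an endpoint (possible as $a \neq b$); then $p \in D$, so there is an open ball $B$ with $p \in B \subseteq D$. Because $p \in J = \partial\Int(J) = \partial\Ext(J)$, the ball $B$ meets both $\Int(J)$ and $\Ext(J)$; choose $q_1 \in B \cap \Int(J)$ and $q_2 \in B \cap \Ext(J)$. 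Both $q_1,q_2$ lie in $D$ (they are in $B$) and neither lies on $C$ (they avoid $J$), so each belongs to $D \setminus C = C^+ \cup C^-$. They cannot lie in the same side, since a side is connected and contained in the disjoint union $\Int(J) \cup \Ext(J)$ and so cannot meet both pieces. Hence one side contains $q_1$ and is therefore included in $\Int(J)$, while the other contains $q_2$ and is included in $\Ext(J)$; in particular $\Int(\tau \cup C)$ includes exactly one side of $C$.

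The only mildly delicate point is ruling out simultaneously the possibilities ``both sides inside $J$'' and ``both sides outside $J$''; the key observation that dispatches both at once is that a small ball about an interior point of $C$ sits inside the open set $D$ yet straddles $J$, forcing the two sides apart. Everything else is a routine appeal to the Jordan curve theorem and to connectedness.
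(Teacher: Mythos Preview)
Your proof is correct and follows essentially the same route as the paper: verify that $J=C\cup\tau$ is a Jordan curve, use connectedness of each side of $C$ to place it in a single complementary component of $J$, and use a small ball in $D$ about an interior point of $C$ to see both components are hit. Your organization is in fact a bit tidier than the paper's: you handle ``at least one side inside'' and ``not both sides inside'' in a single symmetric step (the ball meets both $\Int(J)$ and $\Ext(J)$, so the two sides are forced into different components), whereas the paper argues those two directions separately, the second by contradiction; you also make explicit the verification that $\tau\cap C=\{a,b\}$, which the paper leaves implicit.
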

 
 \begin{proof}
 Let $J = C \cup \tau$.  
 
 We first claim that if the interior of $J$ contains one point of a side of $C$, then it must include all of that side.  For, let $S$ be a side of $C$, and suppose the interior of $J$ contains a point of $S$.  Let $q_1$ denote such a point.  Let $q_2$ be any other point of $S$.  Since $S$ is open and connected, it includes an arc $\sigma$ from $q_1$ to $q_2$.  Since $D$ includes $\sigma$, $\tau$ contains no point of $\sigma$.  Since $S$ includes $\sigma$, $\sigma$ contains no point of $C$.  Thus, $\sigma$ never crosses $J$.  So, since the interior of $J$ is connected, $q_2$ belongs to the interior of $J$ as well.
 
 We now show that the interior of $J$ includes \emph{at least} one side of $C$.  For, let $p \in C \cap D$.  Since $D$ is open, there is a positive number $\epsilon$ so that $D$ includes $D_\epsilon(p)$.  Since $p$ belongs to the boundary of the interior of $J$, $D_\epsilon(p)$ contains a point of the interior of $J$.  Let $q$ denote such a point.  Since $q$ belongs to the interior of $J$, $q$ does not belong to $C$.  But, since $D$ includes $D_\epsilon(p)$, $q$ belongs to $D$.  Thus, $q$ must belong to a side of $C$.  By what has just been shown, the interior of $J$ includes all of this side.
 
 Finally, we claim that the interior of $J$ does not include both sides of $C$.  For, by way of contradiction, suppose otherwise.  Then, the interior of $J$ includes $D - C$.  Let $p \in C \cap D$.  Let $U$ denote the interior of $J$.  Thus, $\overline{U} = U \cup J$.  In addition, $p \in \partial \overline{U}$.  Since $p \in D$, $D_\epsilon(p) \subseteq D$ for some $\epsilon > 0$.  So, $D_\epsilon(p)$ contains a point of the complement of $\overline{U}$, $q$.  There is a positive number $\delta$ so that $D_\delta(q) \subseteq D_\epsilon(p) \cap (\C - \overline{U})$.  Thus, $D_\delta(q) \subseteq D - C$.   But, the interior of $J$ includes $D - C$ so this is a contradiction.
 \end{proof}
 
 \begin{theorem}\label{thm:WITNESSES} 
 If there is an acceptable crosscut $C$ so that $(s_0, r_0)$ witnesses that $C$ recognizably bounds the value of $\phi$ on $\zeta$, then the diameter of 
 $C$ is at least as large as $(1 + \sqrt{2})^{-1}|\phi(z_1) - \phi(z_2)|$ for all $z_1, z_2 \in D_{r_0}(\zeta) \cap \D$.
 \end{theorem}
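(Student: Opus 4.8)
The plan is to manufacture a Jordan curve $J$ from $C$ and a taxicab arc that misses $D$, show that $\phi$ carries $D_{r_0}(\zeta)\cap\D$ into the bounded complementary component of $J$, and then extract the diameter bound from the fact that a taxicab arc is not much larger than the segment joining its endpoints.

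First I would set up the Jordan curve. Let $a,b$ be the endpoints of $C$. Since $C$ is acceptable, $a$ and $b$ are acceptably placed, so by Definition \ref{def:ACCEPTABLY.PLACED} there is a taxicab arc $\tau$ from $a$ to $b$ containing no point of $D$; fix such a $\tau$ and set $J=C\cup\tau$. Because $C$ is a crosscut its only points on $\partial D$ are $a$ and $b$, and $\tau\cap D=\emptyset$, so $C\cap\tau=\{a,b\}$ and $J$ is a Jordan curve; write $\Int(J)$ and $\Ext(J)$ for its bounded and unbounded complementary components. Two elementary estimates drive everything. A taxicab arc from $a$ to $b$ has length $|\Re(a)-\Re(b)|+|\Im(a)-\Im(b)|\le\sqrt{2}\,|a-b|\le\sqrt{2}\,\diam(C)$, so $\diam(\tau)\le\sqrt{2}\,\diam(C)$ and hence $\diam(J)=\diam(C\cup\tau)\le\diam(C)+\diam(\tau)\le(1+\sqrt{2})\,\diam(C)$. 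Also $\diam(\Int(J))\le\diam(J)$ (a bounded open set has diameter at most that of its boundary, and $\partial(\Int(J))=J$); thus $\diam(\overline{\Int(J)})=\diam(\Int(J))\le(1+\sqrt{2})\,\diam(C)$.

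The heart of the matter is the claim that $\phi(0)\notin\Int(J)$, and this is the only place the size part of acceptability (Definition \ref{def:ACCEPTABLE}) is used. Suppose toward a contradiction that $\phi(0)\in\Int(J)$. Since $C$ is acceptable, $\diam(C)<(1+\sqrt{2})^{-1}\,|\phi(0)-\phi(\zeta'/2)|$ for every unimodular $\zeta'$, so $\diam(\overline{\Int(J)})<|\phi(0)-\phi(\zeta'/2)|$ for all such $\zeta'$; as $\phi(0)\in\overline{\Int(J)}$, this forces $\phi(\zeta'/2)\notin\overline{\Int(J)}$ for every unimodular $\zeta'$. Hence the Jordan curve $\gamma:=\phi(\partial D_{1/2}(0))$ lies entirely in $\Ext(J)$. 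Now $\Int(J)$ is connected and disjoint from $\Ext(J)$, hence from $\gamma$, so it is contained in a single component of $\C-\gamma$. Since $\overline{D_{1/2}(0)}\subseteq\D$, the map $\phi$ is continuous and injective on the compact set $\overline{D_{1/2}(0)}$, so $\phi$ maps $\overline{D_{1/2}(0)}$ homeomorphically onto its image, and by the Jordan curve theorem $\phi(D_{1/2}(0))$ is the bounded component $\Int(\gamma)$ of $\C-\gamma$ and $\phi(\overline{D_{1/2}(0)})=\overline{\Int(\gamma)}$. As $\phi(0)$ lies in $\Int(J)$ and in $\phi(D_{1/2}(0))=\Int(\gamma)$, the component of $\C-\gamma$ containing $\Int(J)$ is $\Int(\gamma)$, so $\Int(J)\subseteq\Int(\gamma)$. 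Therefore $J=\partial(\Int(J))\subseteq\overline{\Int(J)}\subseteq\overline{\Int(\gamma)}=\phi(\overline{D_{1/2}(0)})\subseteq D$, and in particular $\tau\subseteq D$; but $a\in\tau$ and $a\in\partial D$, a contradiction. Hence $\phi(0)\notin\Int(J)$.

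To finish, by Proposition \ref{prop:INTERIOR} the set $\Int(J)$ contains exactly one side of $C$; since $\phi(0)\in C^-$ but $\phi(0)\notin\Int(J)$, that side is not $C^-$, so it is $C^+$ and $C^+\subseteq\Int(J)$. On the other hand, since $r_0<1/2$ the circle $\partial D_{r_0}(\zeta)$ meets $\overline{\D}$ in a single arc, the image $A_{r_0,\zeta}$ of $\phi$ on that arc is a crosscut of $D$ omitting $\phi(0)$, and removing the arc from $\D$ leaves two components, the one not containing $0$ being $D_{r_0}(\zeta)\cap\D$; hence $A_{r_0,\zeta}^{+}=\phi(D_{r_0}(\zeta)\cap\D)$. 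By Theorem \ref{thm:BOUNDING.SIDE}, $A_{r_0,\zeta}^{+}\subseteq C^{+}$. Combining, $\phi(D_{r_0}(\zeta)\cap\D)\subseteq C^{+}\subseteq\Int(J)$, so for all $z_1,z_2\in D_{r_0}(\zeta)\cap\D$ we have $\phi(z_1),\phi(z_2)\in\Int(J)$ and thus $|\phi(z_1)-\phi(z_2)|\le\diam(\Int(J))\le(1+\sqrt{2})\,\diam(C)$, which is what was claimed. The hard part is the step $\phi(0)\notin\Int(J)$: the rest is bookkeeping with Proposition \ref{prop:INTERIOR}, Theorem \ref{thm:BOUNDING.SIDE}, and the two diameter estimates, but that step needs both the acceptability bound on $\diam(C)$ and the slightly delicate observation that once the circle of radius $1/2$ is pushed out of $\overline{\Int(J)}$, all of $\Int(J)$---hence all of $\tau$---is trapped inside $\phi(\overline{D_{1/2}(0)})\subseteq D$, contradicting the choice of $\tau$.
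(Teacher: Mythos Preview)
Your proof is correct and follows the same overall plan as the paper's: form $J=C\cup\tau$ from a taxicab arc $\tau$ missing $D$, bound $\diam(J)\le(1+\sqrt2)\,\diam(C)$, use Proposition~\ref{prop:INTERIOR} to conclude $\Int(J)\supseteq C^{+}$, and finish with Theorem~\ref{thm:BOUNDING.SIDE}. The only substantive difference is in the step that rules out $\Int(J)\supseteq C^{-}$. The paper dispatches this in two lines: since $s_0<1/2$, both $\phi(0)$ and $\phi(\zeta/2)$ lie in $C^{-}$, and $|\phi(0)-\phi(\zeta/2)|$ already exceeds $\diam(\Int(J))$, so $C^{-}$ cannot fit. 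You instead prove the sharper statement $\phi(0)\notin\Int(J)$ by contradiction, pushing the Jordan curve $\phi(\partial D_{1/2}(0))$ entirely into $\Ext(J)$ and then trapping $J$---hence the boundary point $a\in\tau$---inside $\phi(\overline{D_{1/2}(0)})\subseteq D$. Your argument is self-contained in that it never needs to know $\phi(\zeta/2)\in C^{-}$ (a fact that leans on the structure of ``recognizably bounds'' imported from \cite{McNicholl.2011.b}), but it is appreciably longer; the paper's two-point diameter observation reaches the same conclusion with almost no work.
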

 
 \begin{proof}
 Suppose $C$ is acceptable and that $(s_0, r_0)$ witnesses that $C$ recognizably bounds the value of $\phi$ on $\zeta$.
 Let $p,q$ be the endpoints of $C$.  Since $C$ is acceptable, there is a taxicab arc from $p$ to $q$ that contains no point of $D$; let $\tau$ be such an arc.  Let $J = C \cup \tau$.  Note that the diameter of $\tau$ is at most $\sqrt{2}|p - q|$.  
 Thus, the diameter of $J$ is at most $(1 + \sqrt{2})\diam(C)$.  
 
 We claim that the interior of $J$ does not include $C^-$.  For, by the definition of `acceptable' (Definition \ref{def:ACCEPTABLE}), 
 \[
 (1 + \sqrt{2}) \diam(C) < |\phi(0) - \phi(\zeta/2)|.
 \]
Thus, the diameter of the interior of $J$ is smaller than $|\phi(0) - \phi(\zeta/2)|$.  Since $s_0 < 1/2$ (by definition of `recognizably bounds'; see Definition \ref{def:RECOGNIZABLE}), $C^-$ contains $\phi(0)$ and $\phi(\zeta/2)$.  Thus, the interior of $J$ can not include $C^-$.  
 
 Now, by Proposition \ref{prop:INTERIOR}, the interior of $J$ includes exactly one side of $C$.  Thus, it must include $C^+$.  So, by Theorem \ref{thm:BOUNDING.SIDE}, it includes $A_{r_0, \zeta}^+$.  This means that $|\phi(z_1) - \phi(z_2)| \leq (1 + \sqrt{2}) \diam(C)$ for all $z_1, z_2 \in D_{r_0}(\zeta) \cap \D$.
 \end{proof}

Since we will use acceptable crosscuts to estimate boundary values, we seek to prove that they not only exist but that arbitrarily small ones exist.  This is captured by the following theorem.

\begin{theorem}\label{thm:ACCEPTABLE.EXIST}
If $\zeta$ is unimodular, then there are acceptable crosscuts of arbitrarily small diameter that recognizably bound the value of $\phi$ on $\zeta$.
\end{theorem}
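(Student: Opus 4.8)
The plan is to obtain the acceptable crosscuts by starting from those furnished by Theorem~\ref{thm:EXISTENCE} and, where necessary, performing a small surgery near their endpoints. Fix a unimodular $\zeta$. First I would dispose of the diameter requirement in Definition~\ref{def:ACCEPTABLE}: since $\phi$ is continuous on $\overline{\D}$ and injective on $\D$, the function $\zeta \mapsto |\phi(0) - \phi(\zeta/2)|$ is continuous and strictly positive on the unit circle, hence bounded below by some $\delta_0 > 0$, so $\delta_0/(1 + \sqrt{2})$ is a single positive threshold. It therefore suffices to produce, for each $\epsilon > 0$, a crosscut of diameter less than $\min(\epsilon, \delta_0/(1+\sqrt{2}))$ that recognizably bounds $\phi$ on $\zeta$ and whose endpoints are acceptably placed.

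The second ingredient is a localization observation: if $C$ recognizably bounds $\phi$ on $\zeta$ with witness $(r_0, s_0)$, then every point of $C$ lies within $\diam(C) + \omega_\phi(s_0)$ of $\phi(\zeta)$, where $\omega_\phi$ is a modulus of continuity for $\phi$ on $\overline{\D}$, and moreover $s_0 \to 0$ as $\diam(C) \to 0$. Indeed, condition~(\ref{def:RECOGNIZABLE.2}) places $\phi((1-s_0)\zeta)$ on $C$, which gives the first claim because $|(1-s_0)\zeta - \zeta| = s_0$; and for the second, if $s_0 \geq c > 0$ then $\phi((1-s_0)\zeta)$ lies in the compact set $K_c = \phi(\{t\zeta : 0 \leq t \leq 1 - c\}) \subseteq D$, so $\diam(C) \geq \mathrm{dist}(\phi((1-s_0)\zeta), \partial D) \geq \mathrm{dist}(K_c, \partial D) > 0$. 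Consequently the whole crosscut collapses onto $\phi(\zeta)$ as its diameter shrinks.

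Now the case analysis. Suppose first that $\phi(\zeta)$ is not a vertex of $D$. Then $\phi(\zeta)$ lies in the relative interior of a unique significant constituent $\sigma$, and since the only accumulation point of the significant constituents is the vertex $\nu_0$, the point $\phi(\zeta)$ is at positive distance from every vertex and from every significant constituent other than $\sigma$; hence some neighborhood $U$ of $\phi(\zeta)$ satisfies $U \cap X \subseteq \sigma$ and contains no vertex. By localization, a sufficiently small crosscut $C$ recognizably bounding $\phi$ on $\zeta$ has both endpoints in $U$, so on $\sigma$ and neither a vertex; and then, exactly as in the case $\sigma = \sigma'$ in the proof of Lemma~\ref{lm:ACCEPTABLY.PLACED}, some taxicab arc joining the endpoints of $C$ contains no point of $D$, so $C$ is acceptable. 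It remains to treat the case where $\phi(\zeta)$ is a vertex $v$. Localization still forces the endpoints of a small such crosscut to lie near $v$, hence on the significant constituents incident to $v$; and a short inspection of the finitely many kinds of vertices shows that, for every $v \neq \nu_0$, the constituents incident to $v$ realize exactly the configurations that the proof of Lemma~\ref{lm:ACCEPTABLY.PLACED} identifies as acceptably placed (a tent or spike together with an adjacent bottom segment, two bottom segments, two adjacent sides of the square, or the two legs of a single tent). Two matters then remain to be arranged: that neither endpoint equals $v$, and --- only when $v = \nu_0$, where the left side of the square together with the accumulating bottom segments and tents or spikes all crowd in --- that the endpoint lying on the part of $\partial D$ that issues from $\nu_0$ along the bottom of the square sits on a bottom segment rather than on a tent or spike.

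I would arrange both by a localized re-routing: replace the last short stretch of $C$ near the offending endpoint by a small arc in $D$ running essentially parallel to $\partial D$ to a nearby non-vertex point of an incident bottom segment, reached by a detour whose diameter is of the order of $\diam(C) + \omega_\phi(s_0)$ and hence tends to $0$ with $\diam(C)$. This modification is confined to a small disk about one endpoint of $C$, which lies in $\overline{A_{s_0,\zeta}^+}$ and far from both the separating arc $A_{s_0,\zeta}$ and the radial segment $\{\phi(t\zeta) : 1 - s_0 \leq t \leq 1 - r_0\}$; so conditions~(\ref{def:RECOGNIZABLE.2}) and~(\ref{def:RECOGNIZABLE.3}) of Definition~\ref{def:RECOGNIZABLE} are untouched and the new crosscut still recognizably bounds $\phi$ on $\zeta$ with the same witness. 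I expect the main obstacle to be checking that condition~(\ref{def:RECOGNIZABLE.four}) survives: one must verify that the diameter of the detour can be kept below the slack in the strict inequality $|\phi(t\zeta) - z| > m(s_0, N_0, r_0)$ --- which is positive because $t$ ranges over the compact interval $[1 - s_0, 1 - r_0]$ and $z$ over the compact set $\overline{A_{s_0,\zeta}^+ \cap C}$ --- and that shrinking $\diam(C)$ shrinks the detour fast enough relative to that slack. This bookkeeping, together with the elementary but slightly tedious verification of the taxicab-arc claims near each kind of vertex, is where the real work lies.
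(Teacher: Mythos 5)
Your treatment of the cases $\phi(\zeta) \notin \{\text{vertices}\}$ and $\phi(\zeta)$ a vertex other than $\nu_0$ tracks the paper's proof closely (including the localization observation that the paper leaves implicit), and the opening reduction of the diameter clause to a single threshold $\delta_0/(1+\sqrt{2})$ is correct. The divergence, and the gap, is in the case $\phi(\zeta) = \nu_0$.

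For that case the paper does not start from an arbitrary small crosscut and then surgically repair an endpoint. Instead, it observes that $\zeta$ is an accumulation point of $\phi^{-1}$ of the unit interval and re-runs the construction in the proof of Theorem~3.4 of \cite{McNicholl.2011.b} (the source of Theorem~\ref{thm:EXISTENCE}) so as to \emph{produce} a crosscut whose endpoints already sit on $[0,i]$ and on a bottom segment, are non-vertices, and are therefore acceptably placed. Your surgery proposal is a genuinely different route, and as you yourself flag, it is not carried to completion. Concretely, the re-routing has to (i) remain a simple arc lying in $D$ except at its new endpoint while circumnavigating a tent whose tip $\nu_{3j+5}$ is a vertex; (ii) land on the correct bottom segment so that $\phi(\zeta)$ still lies in $C^+$; and (iii) preserve conditions~(\ref{def:RECOGNIZABLE.3}) and~(\ref{def:RECOGNIZABLE.four}) of Definition~\ref{def:RECOGNIZABLE}. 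For~(iii) you appeal to ``slack'' in the inequality $|\phi(t\zeta)-z| > m(s_0,N_0,r_0)$, but $m(s_0,N_0,r_0)$, $r_0$, and $s_0$ all vary with the crosscut being shrunk, so there is no single fixed positive slack to win against; you would have to show quantitatively that the detour's diameter is $o$ of that $C$-dependent margin, and the argument as written does not do this. Since the paper avoids the problem entirely by constructing the crosscut with the correct endpoints from the outset, the surgery is the part of your proposal that must be regarded as an unfilled gap rather than an alternative proof.
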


\begin{proof}
Suppose $C$ recognizably bounds the value of $\phi$ on $\zeta$.  By Theorem \ref{thm:EXISTENCE}, the diameter of $C$ can be made as small as desired.  

If $\phi(\zeta)$ is not a vertex, and if the diameter of $C$ is small enough, then its endpoints are not vertices and belong to the same significant constituent of the boundary of $D$; thus they are acceptably placed.  So, suppose $\phi(\zeta)$ is a vertex. 
 If $\phi(\zeta)$ is not $\nu_0$, and if the diameter of $C$ is small enough, then the endpoints of $C$ are not vertices and belong to adjacent line segments in the boundary of $D$ and so are acceptably placed.  
 
 So, suppose $\phi(\zeta) = \nu_0$.  We note that $\phi(\zeta') \neq \nu_0$ for all unimodular $\zeta'$ sufficiently close to $\zeta$ except $\zeta$ itself.  In addition, $\zeta$ is an accumulation point of the pre-image of $\phi$ on the unit interval.  By following the argument in the proof of Theorem 3.4 of \cite{McNicholl.2011.b}, it follows that for each $\epsilon > 0$, there is a crosscut $C$ that recognizably bounds the value of $\phi$ on $\zeta$ and so that one endpoint of $C$ lies on $[0, i]$ and the other lies in the unit interval.  We can also ensure that neither endpoint of $C$ is a vertex.  Thus, the endpoints of $C$ are acceptably placed.  By making the diameter of $C$ small enough, we can also ensure that $C$ is acceptable.
 \end{proof}
 
 \subsection{Approximate world}\label{subsec:APPROX}
 
 In order to apply the results in Subsection \ref{subset:CLASSICAL}, we need a method for generating approximate crosscuts that approximate acceptable crosscuts.  This leads to the following three definitions and theorem.  
 
 \begin{definition}\label{def:CONSERVATIVELY}
 Let $U$ be a subset of the plane.  We say that $U$ \emph{conservatively intersects} the boundary of $D$ if there is a significant constituent of the boundary of $D$, $\sigma$, so that $U \cap \sigma \neq \emptyset$ and so that $\overline{U}$ contains no vertex of the boundary of $D$ and no point of any significant constituent of the boundary of $D$ other than $\sigma$.
 \end{definition}
 
 \begin{definition}\label{def:SET.ACCEPTABLY.PLACED}
 Let $U,V$ be subsets of the plane.  We say that $U,V$ are \emph{acceptably placed} if $p,q$ are acceptably placed whenever $p \in U \cap \partial D$ and $q \in V \cap \partial D$.
 \end{definition}
 
 \begin{definition}\label{def:ALPHA.*}\label{def:ALPHA.STAR}
 Suppose $\mathcal{A}$ is a set of approximate crosscuts.  Let $\mathcal{A}^*$ consist of all $(w_1, \ldots, w_n) \in \mathcal{A}$ so that $w_1$ and $w_n$ conservatively intersect the boundary of $D$, $w_1, w_n$ are acceptably placed, and so that 
 \[
 (1 + \sqrt{2})\diam(\bigcup_j \overline{w_j}) < |\phi(0) - \phi(\zeta/2)|
 \]
 for all unimodular $\zeta$. 
 \end{definition}
 
 It follows that \emph{every} arc approximated by an approximate crosscut in $\mathcal{A}^*$ is acceptable.  
 
 \begin{theorem}\label{thm:ALPHA.*}
 If $\mathcal{A}$ describes $\mathcal{C}_{(s_0, r_0, \zeta)}$, then $\mathcal{A}^*$ describes the set of all acceptable crosscuts in $\mathcal{C}_{(s_0, r_0, \zeta)}$.  
 \end{theorem}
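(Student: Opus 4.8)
The plan is to verify the two conditions in the definition of ``describes'' (Definition \ref{def:DESCRIBES}) for the pair $(\mathcal{A}^*, \mathcal{C})$, where $\mathcal{C}$ denotes the set of acceptable crosscuts in $\mathcal{C}_{(s_0, r_0, \zeta)}$. For the first condition, I would take $(w_1, \ldots, w_n) \in \mathcal{A}^*$. Since $\mathcal{A}$ describes $\mathcal{C}_{(s_0, r_0, \zeta)}$, it approximates some crosscut $C$ with $(s_0, r_0)$ witnessing that $C$ recognizably bounds $\phi$ on $\zeta$. It remains to check $C$ is acceptable, i.e.\ that its endpoints are acceptably placed and its diameter is small enough. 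The diameter bound is immediate: the endpoints of $C$ lie in $w_1 \cup w_n \subseteq \bigcup_j \overline{w_j}$, so $\diam(C) \leq \diam(\bigcup_j \overline{w_j})$, and the defining inequality of $\mathcal{A}^*$ (Definition \ref{def:ALPHA.STAR}) then gives $(1+\sqrt{2})\diam(C) < |\phi(0) - \phi(\zeta/2)|$ for all unimodular $\zeta$, which is exactly the diameter requirement in Definition \ref{def:ACCEPTABLE}. For the endpoints being acceptably placed: the endpoint $p$ of $C$ in $w_1$ is a boundary point of $D$ lying in $w_1 \cap \partial D$; since $w_1$ conservatively intersects $\partial D$, there is a unique significant constituent $\sigma$ with $w_1 \cap \sigma \neq \emptyset$, and $\overline{w_1}$ meets no vertex and no other significant constituent, so $p \in \sigma$ and $p$ is not a vertex. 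Similarly the other endpoint $q$ lies in a significant constituent $\sigma'$ and is not a vertex. Now pick any $p' \in w_1 \cap \partial D$ and $q' \in w_n \cap \partial D$ (these exist because $w_1, w_n$ conservatively intersect $\partial D$); since $w_1, w_n$ are acceptably placed, $p', q'$ are acceptably placed, and then Lemma \ref{lm:ACCEPTABLY.PLACED} transfers this to $p \in \sigma$, $q \in \sigma'$. Hence $C$ is acceptable.

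For the second condition, I would take an acceptable crosscut $C \in \mathcal{C}_{(s_0, r_0, \zeta)}$ and $\epsilon > 0$, and produce an approximate crosscut in $\mathcal{A}^*$ approximating $C$ with error smaller than $\epsilon$. Since $\mathcal{A}$ describes $\mathcal{C}_{(s_0, r_0, \zeta)}$, there is, for each $\delta > 0$, an approximate crosscut $(w_1, \ldots, w_n) \in \mathcal{A}$ approximating $C$ with error smaller than $\delta$. The task is to show that for all sufficiently small $\delta$ this approximant actually lies in $\mathcal{A}^*$. The diameter condition of $\mathcal{A}^*$ is the only one requiring a quantitative argument: as $\delta \to 0$, the approximate arc hugs $C$, so $\diam(\bigcup_j \overline{w_j})$ can be bounded by $\diam(C) + O(\delta)$; since $C$ is acceptable, $(1+\sqrt{2})\diam(C) < |\phi(0) - \phi(\zeta/2)|$ strictly, so a small enough $\delta$ preserves the strict inequality. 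For the conservative-intersection and acceptably-placed conditions on $w_1$ and $w_n$: the endpoints $p, q$ of $C$ are boundary points, not vertices (since $C$ is acceptable), and each lies in the interior of a unique significant constituent $\sigma$, resp.\ $\sigma'$; these constituents are (relatively) closed and the vertices and other constituents form a set whose distance to $p$ (resp.\ $q$) is positive, so for $\delta$ small, $\overline{w_1}$ is confined to a small neighborhood of $p$ meeting only $\sigma$ and no vertex, and likewise for $\overline{w_n}$ — giving conservative intersection. That $w_1, w_n$ are acceptably placed then follows from the acceptable placement of $p, q$ via Lemma \ref{lm:ACCEPTABLY.PLACED} applied to arbitrary non-vertex points of $\sigma$ and $\sigma'$. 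Taking $\delta$ below the minimum of all these thresholds and below $\epsilon$ finishes the construction.

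The main obstacle I anticipate is the bookkeeping in the second condition: one must simultaneously control the error (to get it below $\epsilon$), the diameter of the union of closures (to keep the $\mathcal{A}^*$ inequality strict), and the localization of $\overline{w_1}$ and $\overline{w_n}$ near the endpoints of $C$ (to secure conservative intersection and acceptable placement), and one must be sure that the ``arbitrarily small error'' guarantee from $\mathcal{A}$ describing $\mathcal{C}_{(s_0, r_0, \zeta)}$ can be invoked at an arbitrarily small scale $\delta$ rather than just some fixed one — which it can, by Definition \ref{def:DESCRIBES}(2). A secondary subtlety is that conservative intersection is a statement about $\overline{w_1}$, not just $w_1$, so when I shrink $\delta$ I should ensure the closed wad, not merely its interior, stays in the chosen neighborhood; this is automatic since the wad is a finite union of boxes each of diameter at most the error, so $\overline{w_1}$ lies within distance $\delta$ of $C$ near $p$. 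Everything else is a direct unwinding of the definitions together with the already-established Lemma \ref{lm:ACCEPTABLY.PLACED} and the hypothesis that $\mathcal{A}$ describes $\mathcal{C}_{(s_0, r_0, \zeta)}$.
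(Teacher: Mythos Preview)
Your proposal is correct and follows essentially the same approach as the paper: verify both clauses of Definition~\ref{def:DESCRIBES}, using Lemma~\ref{lm:ACCEPTABLY.PLACED} to transfer acceptable placement between points of the same pair of significant constituents, and shrinking $\delta$ to push the $\mathcal{A}$-approximant into $\mathcal{A}^*$. The only difference is that you spell out the first clause explicitly, whereas the paper dispatches it by the remark after Definition~\ref{def:ALPHA.STAR}; your treatment of the second clause matches the paper's almost step for step, with slightly more care about closures and neighborhoods.
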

 
 \begin{proof}
 Since $\mathcal{A}$ describes $\mathcal{C}_{(s_0, r_0, \zeta)}$, it follows that each approximate crosscut in $\mathcal{A}^*$ approximates an acceptable crosscut in $\mathcal{C}_{(s_0, r_0, \zeta)}$.  So, let $C$ be an acceptable crosscut in $\mathcal{C}_{(s_0, r_0, \zeta)}$, and let $\epsilon$ be a positive number.  We need to show that $C$ is approximated by an approximate crosscut in $\mathcal{A}^*$ with error smaller than $\epsilon$.  So, fix a positive number $\delta < \epsilon$.  Since $\mathcal{A}$ describes $\mathcal{C}_{(s_0, r_0, \zeta)}$, some $(w_1, \ldots, w_n) \in \mathcal{A}$ approximates $C$ with error smaller than $\delta$.  Since $C$ is acceptable, if $\delta$ is small enough, then $w_1$ and $w_n$ conservatively intersect the boundary of $D$.  In addition, since $C$ is acceptable, if $\delta$ is small enough, then 
 \[
 (1 + \sqrt{2})\diam(\bigcup_j \overline{w_j}) < |\phi(0) - \phi(\zeta/2)|
 \]
 for all unimodular $\zeta$.  Finally, since $C$ is acceptable, if $w_1$ and $w_n$ conservatively intersect the boundary of $D$, then it follows from Lemma \ref{lm:ACCEPTABLY.PLACED} that $w_1$, $w_n$ are acceptably placed.  Thus, $(w_1, \ldots, w_n) \in \mathcal{A}^*$ if $\delta$ is small enough.  Thus, there is an approximate crosscut in $\mathcal{A}^*$ that approximates $C$ with error smaller than $\epsilon$.
 \end{proof}
 
 \subsection{Computable world}\label{subsec:COMPUTABLE}
 
  Our first goal is to show that $\mathcal{A}^*$ is computably enumerable whenever $\mathcal{A}$ is. This is accomplished via the following two lemmas.   Let $\sigma_0$, $\sigma_1$, $\ldots$ label the significant constituents of the boundary of $D$ in clockwise order so that $\sigma_0 = [0, i]$.

 \begin{lemma}\label{lm:SIGMA.CE}
 For each $k$, $\sigma_k$ is computably closed uniformly in $k$.
 \end{lemma}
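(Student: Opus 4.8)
The plan is to exhibit, uniformly in $k$, an enumeration of $R(\sigma_k)$, the set of open rational rectangles meeting $\sigma_k$. The point is that each significant constituent $\sigma_k$ is a finite union of line segments with computable (or at least computably approximable) endpoints: the four sides of the unit square are segments between the explicitly given vertices $\nu_0,\nu_1,\nu_2,\nu_3$, while the tent or spike with index $j$ consists of $[\nu_{3j+4},\nu_{3j+5}]\cup[\nu_{3j+5},\nu_{3j+6}]$. So the first step is to observe that, given $k$, one can compute an index $j$ (or recognize that $\sigma_k$ is one of the four unit-square sides) and thereby reduce the problem to enumerating $R$ of a union of at most two segments.

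For a segment $[z,w]$ with computable endpoints, $R([z,w])$ is computably enumerable: an open rational rectangle $R$ meets $[z,w]$ iff there is a rational $t\in[0,1]$ with $(1-t)z+tw\in R$, and this is a $\Sigma^0_1$ condition one can search for by dovetailing over rational $t$ and over rational approximations to $z$ and $w$. (Openness of $R$ is what makes the condition semidecidable even when $z,w$ are only approximable rather than exactly rational.) Since a finite union of computably closed sets is computably closed — $R(\sigma_k)=R(\text{seg}_1)\cup R(\text{seg}_2)$ — this handles every $\sigma_k$ once we know its defining segments, and the whole procedure is uniform in $k$.

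The one genuine subtlety is that the endpoints $\nu_{3j+4}$ and $\nu_{3j+6}$ of a tent are \emph{not} rational in an obviously computable way: their value depends on whether $j\in A$ and, if so, on the stage $s$ at which $j$ enters $A$. However, $A$ is computably enumerable, so one can compute better and better rational approximations to $\nu_{3j+4}$ (and $\nu_{3j+6}$): run $M$ on input $j$; until it halts, use $2^{-(j+1)}$ as the current guess; once it halts at stage $s$, switch to the exact rational value $2^{-(j+1)}\pm 2^{-(j+3+s)}$. This gives a computable sequence of rationals converging to $\nu_{3j+4}$, which is all that the segment-enumeration argument in the previous paragraph requires — note it does \emph{not} require deciding $j\in A$, only enumerating $A$. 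I expect this point — reconciling the incomputability built into the endpoints with the computable closedness of $\sigma_k$ — to be the main thing to get right; everything else is the routine verification that "open rectangle meets a segment with approximable endpoints" is semidecidable uniformly, and that finite unions preserve computable closedness.
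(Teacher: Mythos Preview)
Your approach is correct and differs from the paper's. The paper does not factor through computability of the vertices: for a tent or spike $\sigma_k$ with index $j$ it writes down directly a $\Sigma^0_1$ condition for ``$R \cap \sigma_k \neq \emptyset$'' by dovetailing over stages $s$. At each $s$ one checks whether $j$ has entered $A$; if so, the tent has explicit rational vertices and intersection with $R$ is decidable outright, while if not, one checks whether the horizontal extent of $R$ already contains an interval of the form $[2^{-(j+1)}, 2^{-(j+1)} + 2^{-(j+3+s)}]$ about the spike location, which forces intersection no matter what happens at later stages. Your route---show that each $\nu_m$ is a computable point uniformly in $m$, then invoke the general fact that a segment between computable endpoints is computably closed and that finite unions preserve this---is cleaner and more modular, and makes the role of the construction more transparent.

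Two small points to tighten. First, the bottom side of the unit square is not itself a significant constituent: it is broken into the infinitely many horizontal segments $[\nu_3,\nu_4], [\nu_6,\nu_7], [\nu_9,\nu_{10}],\ldots$ lying between successive tents or spikes. This does not affect your argument, since these too are segments with computable endpoints and the map $k \mapsto (\text{type of }\sigma_k,\text{ relevant }j)$ is computable. Second, the phrase ``computable sequence of rationals converging to $\nu_{3j+4}$'' undersells what you have and is, taken literally, not sufficient: a computable sequence converging without a computable modulus does not in general make membership in an open rectangle semidecidable. What you actually get is the explicit bound $|a_t - \nu_{3j+4}| \leq 2^{-(j+3+t)}$, because if $j$ has not entered $A$ by stage $t$ then either $a_t$ is exact or $j$ enters at some $s>t$ with perturbation $2^{-(j+3+s)} < 2^{-(j+3+t)}$. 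So each $\nu_m$ is a genuinely computable point, and your segment-enumeration step goes through.
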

 
 \begin{proof}
 If $\sigma_k$ is neither a tent nor a spike, then this is obvious.  So, suppose $\sigma_k$ is either a tent or a spike.  Then, there exists unique $j \geq 1$ so that $\sigma_k = [\nu_{3j + 1}, \nu_{3j + 2}] \cup [\nu_{3j + 2}, \nu_{3(j+1)}]$.  
In addition, by construction, $\nu_{3j+1} = \nu_{3(j+1)}$ if and only if $j \not \in A$.  Furthermore, 
if $j \in A_{\mbox{at $s$}}$, then $\nu_{3j + 1} = 2^{-(j+1)} + 2^{-(j + 3 + s)}$ and $\nu_{3(j+1)} = 2^{-(j+1)} - 2^{-(j + 3 + s)}$.  
 
Let $R = (a,b) \times (c,d)$.  It follows that $R \cap \sigma_k \neq \emptyset$ if and only if 
there exists $s$ so that $j \in A_{\mbox{at $s$}}$ and 
\[
R \cap ([2^{-(j+1)} - 2^{-(j + 3 + s)}, \nu_{3j+2}] \cup [2^{-(j+1)} + 2^{-(j + 3 + s)}, \nu_{3j+2}]) \neq \emptyset
\]
or so that $j \not \in A_s$ and $(a,b)$ includes one of the intervals $[2^{-(j+1)} - 2^{-(j + 3 + s)}, 2^{-(j+1)}]$, $[2^{-(j+1)}, 2^{-(j+1)} + 2^{-(j + 3 + s)}]$.  Hence, the set of all open rational rectangles that contain a point of $\sigma_k$ is computably enumerable; that is, $\sigma_k$ is computably closed.
 \end{proof}
 
 \begin{lemma}\label{lm:SIGMA.CO.CE}
 For each $k \in \N$, the complement of $\overline{\partial D - \sigma_k}$ is computably open uniformly in $k$.
 \end{lemma}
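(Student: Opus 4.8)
The plan is to produce, uniformly in $k$, a computable enumeration of the closed rational rectangles $R$ contained in $\C \setminus \overline{\partial D - \sigma_k}$. I would first carry out a purely classical reduction. Since $\partial D = \bigcup_\ell \sigma_\ell$, since distinct significant constituents meet in at most one shared vertex, and since the family $\{\sigma_\ell\}$ is locally finite except at the point $\nu_0 = 0$, one checks that $\overline{\partial D - \sigma_k} \subseteq \bigl(\bigcup_{\ell \neq k} \sigma_\ell\bigr) \cup \{0\}$; conversely $0$, and every vertex that is an endpoint of $\sigma_k$, lies in $\overline{\partial D - \sigma_k}$, since each such point is an accumulation point of $\sigma_\ell \setminus \sigma_k$ for a suitable $\ell \neq k$ (or, in the case of $0$, of the bottom segments). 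Hence a closed rational rectangle $R$ is contained in $\C \setminus \overline{\partial D - \sigma_k}$ if and only if $0 \notin R$ and $R \cap \sigma_\ell = \emptyset$ for every $\ell \neq k$.

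Next I would localize. The condition $0 \notin R$ is decidable, and when it holds one computes a positive rational $d$ with $\overline{D_d(0)} \cap R = \emptyset$. From the explicit coordinates of the construction, the tent or spike, and the bottom segment, associated with an index $j$ (i.e. built from $\nu_{3j+3}, \ldots, \nu_{3j+6}$) both lie in $\overline{D_{2^{-j}}(0)}$, so every significant constituent other than $\sigma_0, \sigma_1, \sigma_2$ and the finitely many associated with an index $j$ satisfying $2^{-j} \geq d$ is automatically disjoint from $R$. Writing $L$ for this finite, computable list of constituents, it remains to handle each $\sigma_\ell$ with $\ell \in L$ and $\ell \neq k$; and here I need only \emph{semidecide} $R \cap \sigma_\ell = \emptyset$, because $R$ will be enumerated as soon as disjointness from all the relevant constituents has been confirmed.

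For the semidecision I would use shrinking outer approximations driven by the enumeration of $A$. For $\ell \in \{0,1,2\}$ the constituent $\sigma_\ell$ is explicitly given and disjointness from $R$ is decidable. For a constituent associated with an index $j$, at each stage $t$ I would assign it an explicitly computable set $G_{\ell,t}$: if the indices relevant to $\sigma_\ell$ (namely $j$, and also $j-1$ for a bottom segment) have been seen to enter $A$ by stage $t$, then $\sigma_\ell$ is known exactly and $G_{\ell,t} = \sigma_\ell$; otherwise $G_{\ell,t}$ is the thin isosceles triangle with apex $2^{-(j+1)}(1+i)$ over the base $[2^{-(j+1)} - 2^{-(j+4+t)}, 2^{-(j+1)} + 2^{-(j+4+t)}]$ when $\sigma_\ell$ is a tent or spike, and the correspondingly conservative closed subinterval of the real axis when $\sigma_\ell$ is a bottom segment. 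Then $\sigma_\ell \subseteq G_{\ell,t}$ always, so confirming $R \cap G_{\ell,t} = \emptyset$ confirms $R \cap \sigma_\ell = \emptyset$; and $\bigcap_t G_{\ell,t} = \sigma_\ell$, the excess collapsing onto the spike (respectively, onto the segment), so if $R \cap \sigma_\ell = \emptyset$ then by compactness of $R$ one has $R \cap G_{\ell,t} = \emptyset$ for all sufficiently large $t$. Dovetailing over all closed rational rectangles $R$ and all stages $t$, and enumerating $R$ the first time a stage $t$ passes every check $R \cap G_{\ell,t} = \emptyset$ for $\ell \in L \setminus \{k\}$, yields the desired enumeration, and the whole procedure is uniform in $k$ since each step above is.

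The step I expect to be the main obstacle is arranging the outer approximations in the bottom-segment case so as to be simultaneously sound and complete. The endpoints $\nu_{3j+3}$ and $\nu_{3j+4}$ may both recede from their default values $2^{-j}$ and $2^{-(j+1)}$ (as $j-1$, respectively $j$, enters $A$), so the bottom segment need not fill the interval $[2^{-(j+1)}, 2^{-j}] \times \{0\}$; the vacated subintervals lie under tents and hence belong to $\C \setminus \partial D$, so a rectangle parked in such a gap must still be enumerated. Thus $G_{\ell,t}$ must be allowed to shrink off both ends of this interval as new information about $A$ appears, while never excluding a point actually on $\sigma_\ell$. Verifying the a priori scale bounds on the constituents and confirming that $0$ and the shared vertices genuinely lie in $\overline{\partial D - \sigma_k}$ are routine and I would dispatch them briefly.
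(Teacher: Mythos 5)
Your proof is correct and follows essentially the same route as the paper: both localize away from the accumulation point $0$ (the paper via the growing rational boxes $R_j$, you via a computed rational distance $d$), thereby reducing to finitely many significant constituents, and then semidecide disjointness of $R$ from each of those constituents. Your treatment is somewhat more explicit than the paper's -- in particular you spell out the shrinking outer approximations $G_{\ell,t}$ and the compactness argument that makes disjointness from a tent/spike/bottom-segment genuinely semidecidable, a step the paper's "It follows" leaves to the reader -- but the underlying idea is the same.
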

 
 \begin{proof}
 For each $j \geq 1$, let 
 \[
 m_j = 2^{-(j+3)} + 2^{-(j+5)} + 2^{-(j + 2)} - 2^{-(j+4)}.
 \]
 It follows that $m_j$ is between $\nu_{3j + 4}$ and $\nu_{3(j+1)}$ for all $j \geq 1$.  
 For each $j$, let 
 \[
 R_j = (-2^{-j}, m_j) \times (-2^{-j}, m_j).
 \]
 Thus, $\nu_0 \in R_j$ and $\nu_k \in R_j$ when $k > 3(j+1)$.  So, $R_j$ includes all but finitely many significant constituents of the boundary of $D$. 
 
 Let $R$ be a closed rational rectangle.  It follows that $R$ contains no point of $\overline{\partial D - \sigma_k}$ if an only if there exists $j \geq 1$ so that $R_j \cap R = \emptyset$ and $\sigma' \cap R = \emptyset$ whenever $\sigma'$ is a significant constituent of the boundary of $D$ such that $\sigma_k \neq \sigma'$ and $\sigma' \not \subseteq R_j$.  It follows that the complement of $\overline{\partial D - \sigma_k}$ is computably open.
 \end{proof}
 
 \begin{theorem}\label{thm:CE}
 If $\mathcal{A}$ is a computably enumerable set of approximate crosscuts of $D$, then $\mathcal{A}^*$ is computably enumerable uniformly in $\mathcal{A}$.
 \end{theorem}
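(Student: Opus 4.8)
The plan is to show that membership in $\mathcal{A}^*$ is semi-decidable, uniformly in a given enumeration of $\mathcal{A}$, and then to read off a computable enumeration of $\mathcal{A}^*$ by dovetailing this semi-decision against an enumeration of $\mathcal{A}$. By Definition \ref{def:ALPHA.STAR}, an element $(w_1, \ldots, w_n)$ of $\mathcal{A}$ lies in $\mathcal{A}^*$ exactly when: (i) each of $w_1$ and $w_n$ conservatively intersects $\partial D$; (ii) $w_1$ and $w_n$ are acceptably placed; and (iii) $(1 + \sqrt{2}) \diam(\bigcup_j \overline{w_j}) < |\phi(0) - \phi(\zeta/2)|$ for every unimodular $\zeta$. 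I will argue that (i) and (iii) are semi-decidable and that (ii) is decidable once (i) has succeeded.

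Consider (i). Write the wad $w_1$ as a finite union $B_1 \cup \cdots \cup B_m$ of open rational boxes, so $\overline{w_1} = \overline{B_1} \cup \cdots \cup \overline{B_m}$ is a finite union of closed rational rectangles. By Definition \ref{def:CONSERVATIVELY}, $w_1$ conservatively intersects $\partial D$ via $\sigma_k$ exactly when $w_1 \cap \sigma_k \neq \emptyset$ and $\overline{w_1} \cap \overline{\partial D - \sigma_k} = \emptyset$; the latter equation captures in one stroke that $\overline{w_1}$ meets no vertex of $\partial D$ and no significant constituent other than $\sigma_k$, because $\overline{\partial D - \sigma_k}$ contains every vertex and every significant constituent different from $\sigma_k$. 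Now "$w_1 \cap \sigma_k \neq \emptyset$" holds iff $B_i \in R(\sigma_k)$ for some $i$, and $R(\sigma_k)$ is computably enumerable uniformly in $k$ by Lemma \ref{lm:SIGMA.CE}; and "$\overline{w_1} \cap \overline{\partial D - \sigma_k} = \emptyset$" holds iff each $\overline{B_i}$ lies in $R(\C - \overline{\partial D - \sigma_k})$, and $\C - \overline{\partial D - \sigma_k}$ is computably open uniformly in $k$ by Lemma \ref{lm:SIGMA.CO.CE}. Hence each of these relations is semi-decidable uniformly in $k$, and dovetailing their conjunction over all $k \in \N$ semi-decides "$w_1$ conservatively intersects $\partial D$"; when this halts it returns the index $k$ of the witnessing constituent, which is unique since conservative intersection prevents $w_1$ from meeting any other $\sigma_{k'}$. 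The same applies to $w_n$, yielding an index $k'$.

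Now consider (ii), assuming $k$ and $k'$ in hand. Then $w_1 \cap \partial D$ and $w_n \cap \partial D$ are nonempty sets of non-vertex points contained in $\sigma_k$ and $\sigma_{k'}$ respectively, so by Lemma \ref{lm:ACCEPTABLY.PLACED} whether $w_1, w_n$ are acceptably placed is a property of the pair $(\sigma_k, \sigma_{k'})$ alone: it holds iff some (equivalently, every) pair of distinct non-vertex points, one in $\sigma_k$ and one in $\sigma_{k'}$, is acceptably placed. This is decidable from $(k, k')$: from $k$ one computes the combinatorial type of $\sigma_k$ --- whether it is $[0,i]$, $[i,1+i]$, $[1+i,1]$, a segment of the lower side of the unit square, or a tent or a spike, together with its position --- since the sequence $\sigma_0, \sigma_1, \ldots$ consists of those three sides followed by an alternation of lower-side segments and tents-or-spikes, a pattern that does not depend on $A$; and the case analysis in the proof of Lemma \ref{lm:ACCEPTABLY.PLACED} then says exactly which pairs of types and positions are acceptably placed. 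Finally, for (iii): since $\phi$ is computable on $\D$ and the circle $\{z : |z| = 1/2\}$ lies in $\D$, the map $\zeta \mapsto |\phi(0) - \phi(\zeta/2)|$ is a computable function on the unit circle, so $\mu := \min_{|\zeta| = 1} |\phi(0) - \phi(\zeta/2)|$ is a computable real, and $\mu > 0$ because $\phi$ is injective on $\D$. By compactness of the unit circle, (iii) is equivalent to the single strict inequality $(1 + \sqrt{2}) \diam(\bigcup_j \overline{w_j}) < \mu$ between two computable reals --- the left-hand side being determined by the rational corner data of the $w_j$ --- and strict inequality between computable reals is semi-decidable.

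Assembling the pieces: enumerate $\mathcal{A}$, and for each $(w_1, \ldots, w_n)$ that appears, run in parallel the semi-decision of (i) for $w_1$ and for $w_n$ and the semi-decision of (iii); whenever the two instances of (i) halt, returning indices $k$ and $k'$, run the decision procedure for (ii) on $(k, k')$; and emit $(w_1, \ldots, w_n)$ as soon as (i), (ii), and (iii) have all succeeded. By the equivalences above, the resulting enumeration lists exactly $\mathcal{A}^*$, and it is produced uniformly from the given enumeration of $\mathcal{A}$. I expect the only delicate point to be the bookkeeping in step (i): one must verify that $\overline{w_1} \cap \overline{\partial D - \sigma_k} = \emptyset$ genuinely encodes "$\overline{w_1}$ avoids every vertex and every significant constituent but $\sigma_k$", and that the witnessing index $k$ is thereby forced to be unique, so that the combinatorial decision in step (ii) is well posed.
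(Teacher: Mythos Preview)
Your argument follows essentially the same route as the paper's: use Lemmas \ref{lm:SIGMA.CE} and \ref{lm:SIGMA.CO.CE} to semi-decide conservative intersection, reduce acceptable placement of $w_1,w_n$ to a decidable predicate on the pair of indices $(k,k')$ via Lemma \ref{lm:ACCEPTABLY.PLACED}, and semi-decide the diameter inequality.  You supply considerably more detail than the paper does, in particular your reduction of condition (iii) to a single strict inequality against the computable real $\mu=\min_{|\zeta|=1}|\phi(0)-\phi(\zeta/2)|$, which the paper leaves implicit.

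One small imprecision is worth flagging.  You assert that $\overline{\partial D-\sigma_k}$ ``contains every vertex,'' and hence that $\overline{w_1}\cap\overline{\partial D-\sigma_k}=\emptyset$ alone guarantees $\overline{w_1}$ misses all vertices.  When $\sigma_k$ is a tent or spike this is not quite true: the apex $\nu_{3j+5}=2^{-(j+1)}(1+i)$ lies in $\sigma_k$ and is isolated from $\partial D-\sigma_k$, so it does not belong to $\overline{\partial D-\sigma_k}$.  A small wad $w_1$ straddling the apex would then pass your test for (i), and since $w_1\cap\partial D$ would contain the vertex $\nu_{3j+5}$, your reduction of (ii) to the pair $(k,k')$ via Lemma \ref{lm:ACCEPTABLY.PLACED} (which presumes only non-vertex points are in play) would misfire.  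The repair is immediate---the apices $\nu_{3j+5}$ are explicit rational points, so one adds the decidable check that $\overline{w_1}$ avoids the relevant apex---and the paper's own terse proof does not spell this out either.
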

 
 \begin{proof}
 Suppose $\mathcal{A}$ is computably enumerable.   It follows from Lemmas \ref{lm:SIGMA.CE} and \ref{lm:SIGMA.CO.CE} that the set of all wads that conservatively intersect the boundary of $D$ is computably enumerable.  The set of all $k,k'$ so that $p,q$ are acceptably placed whenever $p \in \sigma_k$ and $q \in \sigma_{k'}$ is computable.  In addition, the set of all $(w_1, \ldots, w_n)$ so that 
 \[
 \diam(\bigcup_j \overline{w_j}) < (1 + \sqrt{2}) |\phi(0) - \phi(\zeta/2)|
 \]
 for all unimodular $\zeta$ is computably enumerable.  
 It follows that $\mathcal{A}^*$ is computably enumerable uniformly in $\mathcal{A}$.
 \end{proof}
 
 \begin{lemma}\label{lm:Rk}
 From $k \in \N$, it is possible to uniformly compute a finite set of open rational rectangles $\mathcal{R}_k$ that covers the unit circle and so that $|\phi(z_1) - \phi(z_2)| < 2^{-k}$ whenever $R \in \mathcal{R}_k$ and $z_1, z_2 \in R \cap \overline{\D}$.  
 \end{lemma}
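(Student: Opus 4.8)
The plan is to prove a pointwise statement first --- for each unimodular $\zeta$, to produce effectively an open rational rectangle around $\zeta$ on which $\phi$ varies by less than $2^{-k}$ --- and then to pass to a finite subcover using compactness of the unit circle.

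\emph{Pointwise existence.} Fix $k$ and a unimodular $\zeta$. By Theorem~\ref{thm:ACCEPTABLE.EXIST} there is an acceptable crosscut $C$ that recognizably bounds the value of $\phi$ on $\zeta$ and has $(1+\sqrt{2})\diam(C) < 2^{-k}$; by Definition~\ref{def:RECOGNIZABLE} the witnessing pair $(r_0, s_0)$ may be taken to consist of rationals. Theorem~\ref{thm:WITNESSES} then gives $|\phi(z_1) - \phi(z_2)| \leq (1+\sqrt{2})\diam(C) < 2^{-k}$ for $z_1, z_2 \in D_{r_0}(\zeta) \cap \D$, and since $\phi$ is continuous on $\overline{\D}$ while $D_{r_0}(\zeta) \cap \D$ is dense in $D_{r_0}(\zeta) \cap \overline{\D}$, the same bound holds for all $z_1, z_2 \in D_{r_0}(\zeta) \cap \overline{\D}$. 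Consequently every open rational rectangle $R$ with $\zeta \in R \subseteq D_{r_0}(\zeta)$ --- and these exist --- satisfies $|\phi(z_1) - \phi(z_2)| < 2^{-k}$ whenever $z_1, z_2 \in R \cap \overline{\D}$.

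\emph{Pointwise algorithm.} Given $k$ and a name for a unimodular $\zeta$, I would dovetail, over all rationals $s_0, r_0$ with $0 < r_0 < s_0 < 1/2$, an enumeration of $(\mathcal{A}_{(s_0, r_0, \zeta)})^*$; this is available, and uniform in $(s_0, r_0, \zeta)$, by Theorems~\ref{thm:ALPHA}, \ref{thm:ALPHA.*}, and~\ref{thm:CE}. The search looks for an approximate crosscut $(w_1, \ldots, w_n)$ in one of these sets with $(1+\sqrt{2})\diam(\bigcup_j \overline{w_j}) < 2^{-k}$. The crosscut $C$ of the previous paragraph guarantees success: it lies in $\mathcal{C}_{(s_0, r_0, \zeta)}$ and is acceptable, hence by Theorem~\ref{thm:ALPHA.*} it is approximated with arbitrarily small error, and so by an approximate crosscut whose closed union has diameter below $2^{-k}/(1+\sqrt{2})$, by a member of $(\mathcal{A}_{(s_0, r_0, \zeta)})^*$. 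When such an $(w_1, \ldots, w_n)$ is found it approximates an acceptable crosscut $C' \in \mathcal{C}_{(s_0, r_0, \zeta)}$ with $\diam(C') \leq \diam(\bigcup_j \overline{w_j})$, so Theorem~\ref{thm:WITNESSES} and continuity again give $|\phi(z_1) - \phi(z_2)| < 2^{-k}$ for $z_1, z_2 \in D_{r_0}(\zeta) \cap \overline{\D}$; using the rational $r_0$ together with the name for $\zeta$ one then computes and outputs an open rational rectangle $R_\zeta$ with $\zeta \in R_\zeta \subseteq D_{r_0}(\zeta)$. This halts on every name of every unimodular $\zeta$.

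\emph{Passage to a finite cover.} The pointwise algorithm halts after reading only a finite prefix of its input name, and by correctness its output contains every unimodular point compatible with that prefix; thus the unit circle is covered by the open arcs determined by the prefixes on which the algorithm halts, and over each such arc a single rectangle with oscillation below $2^{-k}$ that contains the whole arc is produced. Since the unit circle is computably compact, I would dovetail the pointwise algorithm over all finite rational prefixes, record each resulting (prefix, rectangle) pair, and search for finitely many of the associated arcs whose union is the whole circle --- a semidecidable condition whose success is guaranteed by compactness. The corresponding rectangles form a finite set $\mathcal{R}_k$, computed uniformly from $k$, with the required properties. The main obstacle is precisely this globalization: the witnessing data $(s_0, r_0, C)$ depends on $\zeta$, which ranges over an uncountable set, so one cannot enumerate over $\zeta$ directly; the continuity-of-realizers argument (alternatively, using Lemma~\ref{lm:UNIMODULAR} to let one witness serve an open arc of $\zeta$'s) together with compactness of the circle circumvents this, at the cost of some care with the Type-2 bookkeeping. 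A lesser point is the step, via continuity of the boundary extension, from the bound on $D_{r_0}(\zeta) \cap \D$ furnished by Theorem~\ref{thm:WITNESSES} to the bound on $D_{r_0}(\zeta) \cap \overline{\D}$ that the lemma actually requires.
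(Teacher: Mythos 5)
Your proposal is correct and follows essentially the same skeleton as the paper's proof: use Theorem~\ref{thm:ACCEPTABLE.EXIST} for pointwise existence, Theorems~\ref{thm:ALPHA}, \ref{thm:ALPHA.*}, \ref{thm:CE} for the effective enumeration of $\mathcal{A}^*_{(s_0,r_0,\zeta)}$, Theorem~\ref{thm:WITNESSES} for the oscillation bound, and compactness of the circle for the finite cover. You are in fact more careful than the paper about the passage from $D_{r_0}(\zeta)\cap\D$ to $D_{r_0}(\zeta)\cap\overline{\D}$, which the paper silently absorbs.

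Where you diverge is the globalization step. You present it through the Type-2 lens: a pointwise algorithm taking an arbitrary name for $\zeta$, and then an appeal to the fact that the algorithm halts after reading a finite prefix, so each output rectangle covers the whole compatible arc. This is workable but requires exactly the bookkeeping you flag; in particular, you must arrange the final output step (choosing $R$ with $\zeta \in R \subseteq D_{r_0}(\zeta)$) so that $\zeta \in R$ is certified by the prefix read, not by additional input --- otherwise the assertion ``by correctness its output contains every unimodular point compatible with that prefix'' is not automatic. The paper sidesteps this entirely by quantifying only over rational arguments $\zeta_\theta = \exp(i\theta)$, $\theta \in \Q$: the collection $\mathcal{R}_k'$ of all open rational $R$ for which some $\theta, s_0, r_0 \in \Q$ and acceptable $C \in \mathcal{C}_{(s_0,r_0,\zeta_\theta)}$ of small diameter exist with $\zeta_\theta \in R \subseteq D_{r_0}(\zeta_\theta)$ is manifestly c.e.\ (no Type-2 names involved), the oscillation bound is immediate from Theorem~\ref{thm:WITNESSES}, and the covering of the whole circle is achieved by exactly the Lemma~\ref{lm:UNIMODULAR} argument you mention as an alternative --- pushing a witness at an arbitrary $\zeta$ to a nearby rational $\zeta_\theta$. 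The finite subcover is then obtained by enumerating $\mathcal{R}_k'$ until the circle is covered. Both routes are correct; the paper's avoids the Type-2 machine entirely by doing the enumeration over a countable dense set of unimodular points, which is cleaner, while yours makes the underlying compactness principle more visible.
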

 
 \begin{proof}
  For each rational number $\theta$, let $\zeta_\theta = \exp(\theta i )$.  
  By Theorem \ref{thm:DESCRIBES.CE}, $\mathcal{A}_{(s_0, r_0, \zeta_\theta)}$ is computably enumerable uniformly in $(s_0, r_0, \theta)$.  So, by Theorem \ref{thm:ALPHA.*} and \ref{thm:CE}, $\mathcal{A}_{(s_0, r_0, \zeta_\theta)}^*$ describes the set of all acceptable crosscuts in $\mathcal{C}_{(s_0, r_0, \zeta_\theta)}$ and is computably enumerable uniformly in $s_0, r_0, \theta$.  

 The proof now mimics the proof of Lemma 6.3 of \cite{McNicholl.2011.b}.   Let $\mathcal{R}_k'$ be the set of all open rational rectangles $R$ for which there exist $s_0, r_0, \theta \in \Q$ and an acceptable $C \in \mathcal{C}_{(s_0, r_0, \zeta_\theta)}$ such that $\zeta_\theta \in R \subseteq D_{r_0}(\zeta_\theta)$ and such that the diameter of $C$ is smaller than 
 $(1 + \sqrt{2})^{-1} 2^{-k}$.  It follows that $\mathcal{R}_k'$ is computably enumerable uniformly in $k$.  It then follows from Theorem \ref{thm:WITNESSES} that $|\phi(z_1) - \phi(z_2)| < 2^{-k}$ whenever $z_1, z_2 \in R \cap \overline{\D}$ and $R \in \mathcal{R}_k'$.  
 
 We claim that $\mathcal{R}_k'$ covers the unit circle.  For, suppose $|\zeta| = 1$.  By Theorem \ref{thm:ALPHA.*}, there is an acceptable crosscut whose diameter is smaller than $(1+ \sqrt{2})^{-1}2^{-k}$ and that recognizably bounds the value of $\phi$ on $\zeta$.  Let $(s_0, r_0)$ witness that $C$ recognizably bounds the value of $\phi$ on $\zeta$.  Since $\mathcal{A}_{(s_0, r_0, \zeta)}^*$ describes $\mathcal{C}_{(s_0, r_0, \zeta)}$, $C$ is approximated by an approximate crosscut $(w_1, \ldots, w_n) \in \mathcal{A}_{(s_0, r_0, \zeta)}^*$ with the property that the diameter of $\overline{w_1} \cup \ldots \cup \overline{w_n}$ is smaller than $(1 + \sqrt{2})^{-1} 2^{-k}$.  
 By Lemma \ref{lm:UNIMODULAR}, there is a closed rational rectangle $R \subseteq D_{r_0}(\zeta)$ such that $\zeta$ belongs to the interior of $R$ and such that for all $\zeta' \in R \cap \partial \D$, $(w_1, \ldots, w_n)$ approximates a crosscut $C'$ such that $(s_0, r_0)$ witnesses that $C'$ recognizably bounds the value of $\phi$ on $\zeta'$.  Since $(w_1, \ldots, w_n) \in \mathcal{A}_{(s_0, r_0, \zeta)}^*$, as remarked after Definition \ref{def:ALPHA.STAR}, every approximate crosscut approximated by $(w_1, \ldots, w_n)$ is acceptable.  Thus, $C'$ is acceptable.  The interior of $R$ contains a point of the form $\zeta_\theta$ for some $\theta \in \Q$.  Since $R$ is closed, if $\zeta_\theta$ is close enough to $\zeta$, then $R \subseteq D_{r_0}(\zeta_\theta)$.  Thus, the interior of $R$ belongs to $\mathcal{R}_k'$.  Hence, the unit circle is covered by $\mathcal{R}_k'$.  
 
 To compute $\mathcal{R}_k$, enumerate $\mathcal{R}_k'$ until the unit circle is covered.
 \end{proof}
 
 The following now follows as in the proof of Theorem 2.8 of \cite{McNicholl.2011.b}.
 
 \begin{theorem}
 The boundary extension of $\phi$ is strongly computable on the unit circle.
 \end{theorem}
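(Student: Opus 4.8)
The plan is to reconstruct the proof of Theorem~2.8 of \cite{McNicholl.2011.b} in the present setting. By Definition~\ref{def:STRONGLY.COMPUTABLE}, what I must produce is an algorithm $P$ that on an open rational rectangle either diverges or outputs an open rational rectangle, whose output (whenever $P$ halts on an input $R$) contains $\phi(z)$ for every $z \in R \cap \overline{\D}$, and that has the stated convergence behaviour at every unimodular point. Two ingredients are in hand. First, as invoked in Section~\ref{sec:CONSTRUCTION}, the Effective Riemann Mapping Theorem \cite{Hertling.1999} gives $\phi$ as a computable map of $\D$ onto $D$; in particular, from a rational $z_0 \in \D$ and a $k \in \N$ one can compute an open rational rectangle $Q$ with $\phi(z_0) \in Q$ and $\diam(Q) < 2^{-k}$. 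Second, Lemma~\ref{lm:Rk} provides, uniformly in $k$, a finite set $\mathcal{R}_k$ of open rational rectangles that covers the unit circle and on which $\phi$ varies by less than $2^{-k}$. I will also fix at the outset an open rational rectangle $B$ with $\overline{D} \subseteq B$, which exists because $D$ is bounded.

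Here is the algorithm $P$. On input an open rational rectangle $R$, first search for a rational point $z_0 \in R \cap \D$; if $R \cap \D = \emptyset$ this search diverges, and so does $P$, which is harmless. Otherwise, having found $z_0$, set $N = \lceil \log_2(1/\diam(R)) \rceil$ (or $N = 0$ if this is negative) and, running $k$ \emph{downward} from $N$ to $0$, test whether some member of $\mathcal{R}_k$ contains $R$; this is a decidable test, since $\mathcal{R}_k$ is a computable finite set and inclusion of open rational rectangles is decidable. If some $k$ passes, let $k^{*}$ be the largest such, compute $Q$ with $\phi(z_0) \in Q$ and $\diam(Q) < 2^{-k^{*}}$, and output an open rational rectangle $R_1$ with $\{\, w : \operatorname{dist}(w, Q) \le 2^{-k^{*}+1}\,\} \subseteq R_1$ and $\diam(R_1) < 2^{-k^{*}+3}$. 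If no $k \le N$ passes, output $B$.

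Verifying the first two requirements of Definition~\ref{def:STRONGLY.COMPUTABLE} is routine. The downward search over $\{0,\dots,N\}$ always terminates, so $P$ either diverges (when $R \cap \D = \emptyset$) or outputs an open rational rectangle; this is Approximation. For Strong Correctness, suppose $P$ halts on $R$ with output $R_1$. If $R_1$ was produced via some $k^{*}$, then $R \subseteq R'$ for some $R' \in \mathcal{R}_{k^{*}}$, so for any $z \in R \cap \overline{\D}$ both $z$ and $z_0$ lie in $R' \cap \overline{\D}$; hence $|\phi(z) - \phi(z_0)| < 2^{-k^{*}}$ by Lemma~\ref{lm:Rk}, and since $\phi(z_0) \in Q$ with $\diam(Q) < 2^{-k^{*}}$ we get $\operatorname{dist}(\phi(z),Q) < 2^{-k^{*}} \le 2^{-k^{*}+1}$, so $\phi(z) \in R_1$. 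If instead $R_1 = B$, then $\phi(z) \in \overline{D} \subseteq B$ for every $z \in R \cap \overline{\D}$.

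The one step that requires care is Convergence, and it is here that the downward search earns its keep: it ties the precision $P$ can \emph{certify} to the scale of the input rectangle, which is exactly the role a boundary connectivity function played in \cite{McNicholl.2011.b}. Given a unimodular $\zeta$ and neighbourhoods $U$ of $\zeta$ and $V$ of $\phi(\zeta)$, I would fix $k_0 \in \N$ so large that the closed $2^{-k_0+3}$-ball about $\phi(\zeta)$ is contained in $V$, pick $R' \in \mathcal{R}_{k_0}$ with $\zeta \in R'$ (possible since $\mathcal{R}_{k_0}$ covers the unit circle), and then choose an open rational rectangle $R$ with $\zeta \in R \subseteq R' \cap U$ and with $\diam(R)$ small enough that $N = \lceil \log_2(1/\diam(R)) \rceil \ge k_0$; this is possible since $R' \cap U$ is an open neighbourhood of $\zeta$. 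On input $R$ the point $z_0$ exists ($R \cap \D \neq \emptyset$), and the search must succeed with some $k^{*} \ge k_0$, because $k = k_0$ already passes the test ($R \subseteq R' \in \mathcal{R}_{k_0}$ and $k_0 \le N$). Hence $P$ halts and outputs $R_1$ with $\diam(R_1) < 2^{-k^{*}+3} \le 2^{-k_0+3}$, and since $\phi(\zeta) \in R_1$ by the correctness already checked, $R_1$ lies inside the closed $2^{-k_0+3}$-ball about $\phi(\zeta)$, hence inside $V$. As also $R \subseteq U$ and $P$ halts on $R$, this is the convergence condition, and $P$ witnesses that $\phi$ is strongly computable on the unit circle; together with Proposition~\ref{prop:STRONGLY.COMP} and the computability of $\phi$ on $\D$ this will yield Theorem~\ref{thm:MAIN}. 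I expect the only genuine subtlety to be precisely the coordination of the search depth $N$, the choice of $k_0$, and the size of the witnessing rectangle $R$; the remaining bookkeeping proceeds as in Theorem~2.8 of \cite{McNicholl.2011.b}.
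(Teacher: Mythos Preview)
Your proposal is correct and follows essentially the approach the paper intends: the paper does not give an independent argument here but simply invokes the proof of Theorem~2.8 of \cite{McNicholl.2011.b}, whose only substantive input is exactly an analogue of Lemma~\ref{lm:Rk}. Your explicit algorithm---search for a rational $z_0\in R\cap\D$, bound the search depth by $N=\lceil\log_2(1/\diam(R))\rceil$, find the largest $k^*\le N$ with $R$ inside some member of $\mathcal{R}_{k^*}$, and output a rectangle of diameter $<2^{-k^*+3}$ around $\phi(z_0)$---is a faithful reconstruction of that argument, and your verification of Approximation, Strong Correctness, and Convergence is sound. One small expository point: the boundary connectivity function in \cite{McNicholl.2011.b} was used to \emph{produce} the covers $\mathcal{R}_k$, not in the passage from those covers to strong computability, so your downward search is not really replacing the boundary connectivity function; it is just a convenient device to make the loop terminate, and any bound tending to infinity as $\diam(R)\to 0$ would serve equally well.
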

 
 So, it follows from Proposition \ref{prop:STRONGLY.COMP} that the boundary extension of $\phi$ is computable.  To complete the proof of Theorem \ref{thm:MAIN}, it only remains to show that $D$ does not have a computable boundary connectivity function.  In fact, we can show slightly more.
 
 \begin{theorem}\label{thm:NOT.ELC}
If $g$ is a boundary connectivity function for $D$, then $A$ is Turing reducible to $g$.  
\end{theorem}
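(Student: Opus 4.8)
The plan is to read membership in $A$ off the tents. Fix $j\in\N$ and suppose $j$ enters $A$ at stage $s$, i.e.\ $j\in A_{\mbox{at $s$}}$. Then $[\nu_{3j+4},\nu_{3j+5}]\cup[\nu_{3j+5},\nu_{3j+6}]$ is a tent: its base corners $p=\nu_{3j+4}=2^{-(j+1)}+2^{-(j+3+s)}$ and $q=\nu_{3j+6}=2^{-(j+1)}-2^{-(j+3+s)}$ are distinct points of $X=\partial D$ with $|p-q|=2^{-(j+2+s)}$, and its apex $\nu_{3j+5}=2^{-(j+1)}(1+i)$ lies at height $2^{-(j+1)}$ above the real axis. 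The crux of the argument is the geometric claim that \emph{every} arc in $X$ from $p$ to $q$ has diameter at least $2^{-(j+1)}$. Granting this, let $g$ be an arbitrary boundary connectivity function for $D$ and apply its defining property with $k=j+1$ to the distinct boundary points $p,q$: we cannot have $0<|p-q|\le 2^{-g(j+1)}$, for otherwise $X$ would contain an arc from $p$ to $q$ of diameter smaller than $2^{-(j+1)}$, contradicting the claim. Hence $2^{-(j+2+s)}=|p-q|>2^{-g(j+1)}$, so the stage $s$ at which $j$ enters $A$ satisfies $s<g(j+1)-j-2$.

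This inequality yields the reduction. To decide ``$j\in A$'' from the oracle $g$, compute $N=g(j+1)$ and then search through the finitely many natural numbers $s$ with $s<N-j-2$, checking for each (by simulating $M$ on input $j$) whether $j\in A_{\mbox{at $s$}}$; answer ``yes'' if and only if some such $s$ is found. If $j\in A$, then by the displayed inequality the unique stage at which $j$ enters $A$ lies in the searched range, so the procedure answers ``yes''; if $j\notin A$, then no $s$ witnesses $j\in A_{\mbox{at $s$}}$, so it answers ``no''. The procedure is computable relative to $g$, so $A\le_T g$.

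For the geometric claim, consider the subset $J_0\subseteq X$ obtained by deleting the interior of every spike; equivalently, $J_0$ is the union of the three square sides $[\nu_0,\nu_1]$, $[\nu_1,\nu_2]$, $[\nu_2,\nu_3]$, all the real-axis segments, and all the tent edges. Then $J_0$ is a Jordan curve: it is the image of a continuous, injective polygonal curve $\gamma_0\colon[0,\infty)\to\C$ whose value tends to $\nu_0=0$ as $t\to\infty$, hence, on identifying $\infty$ with $0$, homeomorphic to $S^1$. Now let $\gamma$ be any arc in $X$ from $p=\nu_{3j+4}$ to $q=\nu_{3j+6}$. Since $p,q\in J_0$ and a spike meets $J_0$ only at its base, $\gamma$ cannot enter the interior of any spike without being forced to revisit that base, contradicting injectivity; so $\gamma\subseteq J_0$. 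In the Jordan curve $J_0$ there are exactly two arcs from $p$ to $q$: one is the tent $[\nu_{3j+4},\nu_{3j+5}]\cup[\nu_{3j+5},\nu_{3j+6}]$, whose diameter is at least $\Im(\nu_{3j+5})-\Im(\nu_{3j+4})=2^{-(j+1)}$ because $\nu_{3j+4}$ is real; the other contains $\nu_1=i$, so its diameter is at least $\Im(i)-\Im(\nu_{3j+4})=1$. In either case the diameter is at least $2^{-(j+1)}$, which is the claim.

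The step I expect to cost the most work is the verification that $\gamma_0$ has no self-intersections beyond those forced by consecutive segments sharing an endpoint, so that $J_0$ really is a Jordan curve. This is a routine but slightly tedious check from the explicit coordinates: for distinct $j$ the tents and spikes lie in pairwise disjoint closed vertical strips and meet the real axis only at their base corners, and the tents, the spikes, and the intervening real-axis segments all shrink to $0$, so no two non-consecutive constituent segments of $J_0$ intersect and no constituent segment passes through a vertex other than its own endpoints. (Local connectivity of $X$ itself, which is what makes $\phi$ have a boundary extension in the first place, is already recorded in Section \ref{sec:CONSTRUCTION}.)
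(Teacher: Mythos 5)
Your proof is correct and follows essentially the same route as the paper's: compare the gap $|\nu_{3j+4}-\nu_{3j+6}|=2^{-(j+2+s)}$ with the fact that any arc of $X$ joining these two points must traverse the tent (or go the long way around) and so has diameter at least $2^{-(j+1)}$, then turn the resulting inequality between $s$ and $g$ into a bounded search. The only cosmetic differences are that the paper calls $g$ on $n+2$ while you use $j+1$ (both suffice, since one needs the lower bound $2^{-(j+1)}$ to exceed $2^{-k}$, and you correctly note the strict inequality in the definition of a boundary connectivity function covers the $k=j+1$ case), and that the paper simply asserts ``the shortest arc in $X$ from $\nu_{3n+4}$ to $\nu_{3n+6}$ consists of the two tent edges'' whereas you supply a justification via the Jordan curve $J_0$ obtained by deleting spike interiors. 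That extra justification is sound and makes explicit something the paper leaves to the reader; it is not a different method, just a more detailed version of the same geometric observation.
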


\begin{proof}
Let $g$ be a boundary connectivity function for $D$.  We can assume $g$ is increasing.  Write $n \in A_s$ if $n$ enters $A$ at some $s' \leq s$.

We claim that $n \not \in A$ if $n \not \in A_{g(n+2)}$.  For, by way of contradiction, suppose that $n \in A - A_{g(n+2)}$.  Let $s$ be the stage at which $n$ enters $A$.  Thus, $s > g(n+2)$.  It follows from the construction that the distance from $\nu_{3n + 4}$ to $\nu_{3n + 6}$ is $2^{-(n+2 + s)}$ which in turn is not larger than $2^{-g(n+2)}$.  However, by construction, the shortest arc in $X$ from $\nu_{3n+4}$ to $\nu_{3n + 6}$ consists of the line segment from $\nu_{3n + 4}$ to $\nu_{3n + 5}$ and the line segment from $\nu_{3n + 5}$ to $\nu_{3n +6}$.  The diameter of each of these segments is at least $2^{-(n+1)}$, and this is a contradiction. 
  Thus, $n \not \in A$ if  $n \not \in A_{g(n+2)}$.  
  
It now follows that $A$ is Turing reducible to $g$.
\end{proof}

\section{Acknowledgement} My deepest thanks to Justin Peters for helpful conversations.  

%
\def\cprime{$'$}
\providecommand{\bysame}{\leavevmode\hbox to3em{\hrulefill}\thinspace}
\providecommand{\MR}{\relax\ifhmode\unskip\space\fi MR }
\providecommand{\MRhref}[2]{%
  \href{http://www.ams.org/mathscinet-getitem?mr=#1}{#2}
}
\providecommand{\href}[2]{#2}

\end{document}